\newcolumntype{C}[1]{>{\centering\arraybackslash}m{#1}}  %for centering table text
\newcommand*{\arXiv}[1]{\bgroup\color{blue}\href{https://arxiv.org/abs/#1}{arXiv:#1}\egroup}
\newcommand*{\doi}[1]{\bgroup\color{blue}\href{https://doi.org/#1}{doi:#1}\egroup}
\newcommand*{\email}[1]{\bgroup\color{blue}\href{mailto:#1}{#1}\egroup}
\renewcommand*{\url}[1]{\bgroup\color{blue}\href{#1}{#1}\egroup}
\setlist[enumerate]{nosep}
\setlist[itemize]{nosep}
\renewcommand{\qedsymbol}{$\blacksquare$}
\renewenvironment{proof}[1][\proofname]{\noindent{\bfseries\sffamily #1.} }{\hfill\qedsymbol\medskip}
\let\oldtitle\title
\renewcommand{\title}[1]{\oldtitle{#1}\newcommand{\theshorttitle}{#1}}
\newcommand{\shorttitle}[1]{\renewcommand{\theshorttitle}{#1}}
\let\oldauthor\author
\renewcommand{\author}[1]{\oldauthor{#1}\newcommand{\theshortauthor}{#1}}
\newcommand{\shortauthor}[1]{\renewcommand{\theshortauthor}{#1}}
\newcommand{\theabstract}[1]{\par\bgroup\noindent\textbf{\textsf{Abstract.}} #1\egroup}
\newcommand{\thekeywords}[1]{\par\smallskip\bgroup\noindent\textbf{\textsf{Keywords.}}\newcommand{\and}{ $\bullet$ } #1\egroup}
\newcommand{\themsc}[1]{\par\smallskip\bgroup\noindent\textbf{\textsf{2020 Mathematics Subject Classification.}}\newcommand{\and}{ $\bullet$ } #1\egroup}
\newcommand*{\affilref}[1]{\ref{affiliation#1}}
\newcommand*{\affiliation}[3]{
	\footnotetext[#1]{\label{affiliation#2} #3}
}
\numberwithin{equation}{section}
\numberwithin{figure}{section}
\numberwithin{table}{section}
\renewcommand*{\geq}{\geqslant}
\renewcommand*{\leq}{\leqslant}
\newcommand*{\rd}{\mathrm{d}}
\newcommand*{\Reals}{\mathbb{R}}
\newcommand*{\F}{\mathcal{F}_{\Delta T}}
\newcommand*{\G}{\mathcal{G}_{\Delta T}}
\newcommand*{\E}{\mathbb{E}}
\newcommand*{\bu}{\bm{u}}
\newcommand*{\bv}{\bm{v}}
\newcommand*{\bU}{\bm{U}}
\newcommand*{\bXi}{\bm{\xi}}
\newcommand*{\LF}{L_{\mathcal{F}}}
\newcommand*{\LG}{L_{\mathcal{G}}}
\newtheorem{theorem}{\sffamily Theorem}[section]
\newtheorem{corollary}[theorem]{\sffamily Corollary}
\newtheorem{lemma}[theorem]{\sffamily Lemma}
\theoremstyle{definition}
\newtheorem{definition}[theorem]{\sffamily Definition}
\newtheorem{assumption}[theorem]{\sffamily Assumption}
\newtheorem{remark}[theorem]{\sffamily Remark}
\renewenvironment{proof}[1][\proofname]{\noindent{\bfseries\sffamily #1.} }{\hfill\qedsymbol\medskip}
\crefname{assumption}{Assumption}{Assumptions}
\Crefname{assumption}{Assumption}{Assumptions}
\title{Error bound analysis of the stochastic parareal algorithm}
\shorttitle{Stochastic parareal analysis}
\author{
    Kamran Pentland\textsuperscript{\affilref{Warwick}} 
    \and
    Massimiliano Tamborrino\textsuperscript{\affilref{Warwick2}} 
    \and
    T.~J.~Sullivan\textsuperscript{\affilref{Warwick},\affilref{Turing}}
}
\date{\today}
\begin{document}
\maketitle

\affiliation{1}{Warwick}{Department of Mathematics, University of Warwick, Coventry, CV4 7AL, United Kingdom\newline (\email{kamran.pentland@warwick.ac.uk},  \email{t.j.sullivan@warwick.ac.uk})}
\affiliation{2}{Warwick2}{Department of Statistics, University of Warwick, Coventry, CV4 7AL, United Kingdom\newline (\email{massimiliano.tamborrino@warwick.ac.uk})}
\affiliation{3}{Turing}{Alan Turing Institute, British Library, 96 Euston Road, London NW1 2DB, United Kingdom}

%%%%%%%%%%%%%%%%%%%%%%%%%%%%%%%%%%%%%%%%%%%%%%%%%%%%%%%%%%%%%%%%%
% Abstract

\begin{abstract}\small
    \theabstract{%
    Stochastic parareal (SParareal) is a probabilistic variant of the popular parallel-in-time algorithm known as parareal.
    Similarly to parareal, it combines fine- and coarse-grained solutions to an ordinary differential equation (ODE) using a predictor-corrector (PC) scheme.
    The key difference is that carefully chosen random perturbations are added to the PC to try to accelerate the location of a stochastic solution to the ODE.
    In this paper, we derive superlinear and linear mean-square error bounds for SParareal applied to nonlinear systems of ODEs using different types of perturbations.
    We illustrate these bounds numerically on a linear system of ODEs and a scalar nonlinear ODE, showing a good match between theory and numerics.
    }
    \thekeywords{%
        {Parallel-in-time}%
        \and%
        {stochastic parareal}%
        \and%
        {error bounds}%
        \and%
        {ordinary differential equations}%
    }
    \themsc{
        {65L70}%Error bounds for numerical methods for ordinary differential equations
        \and%
        {65Y05}%Parallel numerical computation
        \and%
        {65C99}%Probabilistic methods (in numerical analysis)
    }
\end{abstract}

%%%%%%%%%%%%%%%%%%%%%%%%%%%%%%%%%%%%%%%%%%%%%%%%%%%%%%%%%%%%%%%%%
% Paper content

% !TeX root = ms.tex

\section{Introduction} \label{sec:intro} % max 1 page

% Problem Motivation
In recent years, there has been an increasing need to develop faster numerical integration methods for a broad range of time-dependent ordinary, partial, and stochastic differential equations (ODEs/PDEs/SDEs) that form the building blocks of mathematical models of real-world systems.
Advances in high performance computing have fuelled the development of new parallel algorithms for solving such initial value problems (IVPs), with one particular area of increasing focus being parallel-in-time (PinT) methods\footnote{A collection of PinT research papers and software can found at \url{https://parallel-in-time.org/}.}.
Given the causal nature of time, PinT algorithms provide a way to, either iteratively or directly, simulate the solution states of an IVP across the entire time interval of integration concurrently.
Since the seminal work of \cite{Nievergelt1964}, a variety of different PinT methods have been developed over the last $60$ years---refer to \cite{gander2015} and \cite{ong2020} for reviews.

% Parareal
Of particular interest here is the increasingly popular multiple shooting-type/multigrid PinT algorithm know as parareal \citep{lions2001}.
Dividing the time interval into $N$ `slices', typically of fixed size $\Delta T$, parareal iteratively combines solutions calculated by two serial numerical integrators (one coarse- and one fine-grained) on each slice using a predictor-corrector (PC) scheme.
Using $N$ processors, one for each time slice, parareal runs the fine (expensive) computations in parallel, locating a solution to the IVP in a fixed number of iterations $k \leq N$, after which
one says the algorithm has `converged'.
Parallel speedup from parareal is bounded above by $N/k$ and so, ideally, the algorithm finds a solution in $k \ll N$ iterations. 
Since it was proposed, parareal has been shown to provide speedup for a range of different IVPs in areas ranging from molecular dynamics \citep{engblom2009,legoll2020,legoll2022} to plasma physics \citep{samaddar2010,samaddar2019,grigori2021}---refer to \cite[Sec. 2]{ong2020} for an overview.
Variants have also been developed to tackle issues with Hamiltonian systems \citep{bal2008,dai2013}, task scheduling \citep{elwasif2011}, and IVP stiffness \citep{maday2020}.
Others, aimed at reducing the number iterations in parareal, have also emerged.
Approaches include learning the correction term in the PC using Gaussian process emulators \citep{pentland2022b} and building the coarse solver using a Krylov subspace of the set of PC solutions \citep{gander2008krylov}.

% SParareal (what it is, how does it work, what was its purpose, and what were the benefits)
The focus of this paper is the \textit{stochastic parareal} algorithm (henceforth SParareal), developed by \cite{pentland2022a}, in which random perturbations are added to the PC to increase the probability of locating the solution to the IVP in fewer iterations.
In each time slice, $M$ samples are taken from probability distributions constructed using different combinations of coarse and fine solution information (known as ``sampling rules'') and propagated forward in time in parallel.
From this set of perturbed solutions, the ones which generate the most continuous (fine) trajectory in solution space, starting from the exact known initial condition, are selected.
Numerical simulations showed that as the number of samples increased, the probability of locating the exact solution increased and so convergence would occur in fewer iterations and therefore faster wallclock time.
Numerical simulations also showed that, upon multiple simulations of SParareal, one could obtain a distribution of solutions to the IVP which were numerically accurate with respect to the serially located fine solution.
\cite{conrad2017} and \cite{lie2019,lie2022} developed similar (non-time-parallel) numerical schemes in which they perturb the solution states generated by serial numerical integrators to incorporate a measure of uncertainty quantification in the solution of ODEs.

\subsection{Contribution and outline} \label{subsec:contribution}
% why is this important and what has been done before (wrt parareal)?
Deriving rigorous error bounds for SParareal (and other PinT methods in general) is important in demonstrating that numerical solutions obtained in parallel are meaningful, accurate, and that they can be compared to one another \citep{gander2022}.
The first convergence results for parareal assumed that $k$ was fixed, showing that the error of the scheme approached zero as the time slice size $\Delta T \rightarrow 0$ \citep{lions2001,bal2002,bal2005}.
Here, we are interested in studying how numerical errors behave when $\Delta T$ is fixed and the number of iterations $k$ grows.
\cite{gander2007} investigated this, deriving both linear and superlinear error bounds for the scalar linear ODE problem on unbounded and bounded time intervals, respectively. 
Following this, \cite{gander2008} used the generating function method to derive a superlinear bound for nonlinear systems of ODEs on bounded intervals.
Whereas work has been done to derive error bounds for parareal applied to certain SDEs \citep{bal2006,engblom2009}, where the IVP itself contains randomness, those results cannot be applied here, as it is the SParareal scheme itself that contains randomness, not the solvers or the IVP.

% what do we do in this paper and why is it important? Outline of the paper (also highlight what we achieved)
In this paper, we extend the qualitative discussion of numerical convergence in \cite{pentland2022a} by making use of convergence analysis from both the parareal literature (see above) and the sampling-based ODE solver presented in \cite{lie2019}.
We derive explicit mean-square error bounds for SParareal applied to nonlinear systems of ODEs (over a finite time interval), using both \textit{state-independent} and \textit{state-dependent} perturbations.
The rest of the paper is organised as follows.
In \cref{sec:parareal}, we set up the IVP and provide an overview of the (classic) parareal algorithm.
We then detail the SParareal scheme in \cref{sec:SParareal}, outlining how it works and defining the (state-dependent) sampling rules.
In \cref{sec:convergence}, we begin by outlining the assumptions on the fine and coarse integrators required to derive the error bounds. 
We first consider the \textit{state-independent} setting, where the random perturbations do not depend on the current time step or iteration and are assumed to have bounded absolute moments.
In this setting we derive our main result (\cref{thm:superlinear_bound}), a superlinear bound on the mean-square error depending on the time step and iteration. 
Using this result, we can maximise the error over time (fixing the iteration), to derive a linear error bound (\cref{corr:linear_bound}).
In the \textit{state-dependent} setting, we allow the perturbations to depend on the current state of the system, i.e. the coarse and fine solution information at the current time step and iteration, to analyse the convergence of the sampling rules proposed in the original work. 
We derive linear bounds (\cref{corr:SR_24,corr:SR_13}) in this case. 
Following this, we verify these bounds by applying them numerically to a linear system of ODEs and a nonlinear scalar ODE in \cref{sec:numerics}.
We conclude with some brief remarks on the theoretical and numerical results and discuss possible directions for future study in \cref{sec:conclusions}.
In the Appendix, we provide some technical results used to derive the aforementioned error bounds.

\subsection{Notation} \label{subsec:notation}
Let $\E$ denote the expected value of a random variable.
Variables $\bu, \bv \in \Reals^d$ will be $d$-dimensional real-valued vectors unless otherwise stated.
Denote the component-wise absolute value of a vector as $|\bu| = (|u_1|,\ldots,|u_d|)^{\intercal}$ and the Hadamard (component-wise) product as $\bu \circ \bv = (u_1 v_1,\ldots,u_d v_d)^{\intercal}$.
Also note that $\bu^2$ will correspond to component-wise squaring, i.e.\ $\bu^2 = \bu \circ \bu$.
We let $\| \bu \|$ denote the infinity (or uniform) norm, i.e.\ $\| \bu \| = \max_{i=1,\ldots,d}|u_i|$.
The $d$-dimensional vector of ones and the identity matrix will be written as $\bm{1}$ and $I_d$,  respectively.
Non-negative constants will be denoted throughout by $C_1$, $C_2$,\ldots.

% \begin{definition}[Superlinear Convergence] \label{def:superlinear}
% A sequence of real numbers $\{x_k\}_{k \in \mathbb{N}}$ that converge to a limit $x_{\infty}$ is said to converge \emph{superlinearly} if
% \begin{align}
%     \lim_{k \rightarrow \infty} \frac{| x_{k+1} - x_{\infty} |}{| x_{k} - x_{\infty} |} = 0.
% \end{align}
% \end{definition}

% \begin{lemma}[Discrete Gr\"{o}nwall Inequality]
% For non-negative sequences $(x_n)_{n \in \mathbb{N}_0}$, $(a_n)_{n \in \mathbb{N}_0}$, and $(b_n)_{n \in \mathbb{N}_0}$, where 
%     \begin{align} \label{eq:Gronwall}
%         x_n \leq a_n + \sum_{j=0}^{n-1} b_j x_j \quad \text{and} \quad a_n \leq \tilde{A},
%     \end{align}
%     then
%     \begin{align}
%         x_n \leq \tilde{A} \exp{\big( \sum_{j=0}^{n-1} b_j \big)}.
%     \end{align}
% \end{lemma}
% !TeX root = ms.tex

\section{The parareal scheme} \label{sec:parareal}
In this section, we describe the IVP under consideration and define the classic parareal scheme.

\subsection{Problem setup}
Consider the following system of $d \in \mathbb{N}$ autonomous ODEs
\begin{equation} \label{eq:systemODE} 
    \frac{\rd \bu}{\rd t} = \bm{f} \bigl( \bu(t) \bigr) \quad \text{over} \quad t \in [t_0,T], \quad \text{with} \quad \bu(t_0) = \bu_0,
\end{equation}
where $\bm{f}\colon \Reals^d \to \Reals^d$ is a nonlinear vector field, $\bu\colon [t_0,T] \to \Reals^d$ is the time-dependent solution, and $\bu_0 \in \Reals^d$ is the initial value at time $t_0$.
We assume $\bm{f}$ is sufficiently smooth such that \eqref{eq:systemODE} has a unique solution for all initial conditions of interest.
We seek numerical solutions $\bU_n \approx \bu(t_n)$ to the IVP \eqref{eq:systemODE} on a pre-defined mesh $\bm{t} = (t_0,\dots,t_N)$, where $t_n = t_0 + n\Delta T$ for fixed $\Delta T = (T-t_0)/N$.
$N$ is the number of processors required for parareal to compute a solution in parallel, i.e.\ one processor is assigned to each time slice $[t_n,t_{n+1}]$, $n = 0,\dots,N-1$.
Note that everything that follows extends to the nonautonomous case but is not discussed here to simplify explanation and notation. 

\subsection{The scheme}
%define the scheme and how it works
The parareal algorithm uses $N$ processors and two deterministic numerical flow maps (solvers) to locate solutions $\bU^k_n$, at iteration $k$ and time $t_n$, to system \eqref{eq:systemODE}. 
These maps take an initial state $\bU^k_n$ at time $t_n$ and propagate it, over a time slice of size $\Delta T$, to a terminal state $\bU^k_{n+1}$ at time $t_{n+1}$. 
The \textit{fine} solver, denoted as the flow map $\F\colon \Reals^d \rightarrow \Reals^d$, returns a terminal state with high numerical accuracy at very high computational cost.
The cost is high enough that trying to use $\F$ to solve \eqref{eq:systemODE} sequentially, i.e.\ calculate
\begin{align} \label{eq:exact_F_sol}
    \bU_{n+1} = \F (\bU_n) \quad \text{for} \quad n=0,\ldots,N-1,
\end{align}
is computationally infeasible. 
The \textit{coarse} solver, denoted similarly by $\G\colon \Reals^d \rightarrow \Reals^d$, returns a terminal state with a much lower numerical accuracy, at a smaller computational cost than $\F$.
Therefore, $\G$ is fast and allowed to run serially at any time, whilst $\F$ is expensive and must only be run in parallel.
\begin{figure}[t!]
    \centering
    \includegraphics[width=0.95\linewidth]{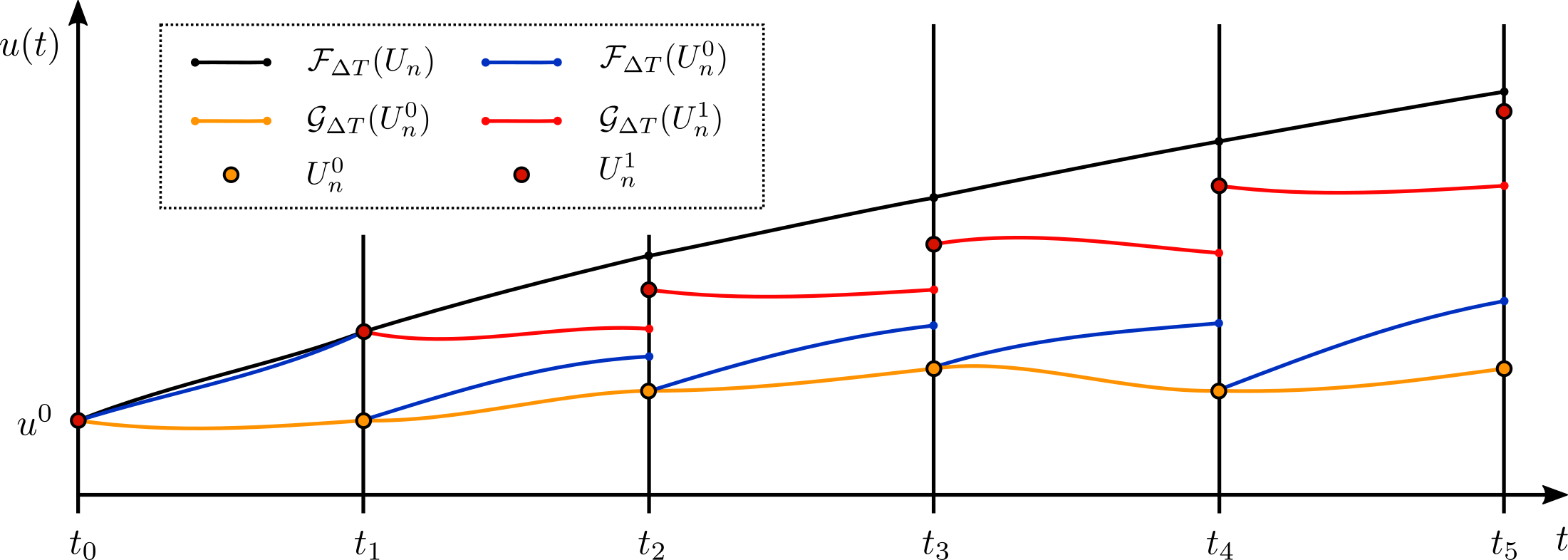}
    \caption{First iteration of parareal that numerically approximates the exact solution (black line), obtained via repeated serial applications of the fine solver, i.e.\ \eqref{eq:exact_F_sol} in the scalar case.
    The coarse initial guess found using $\G$ (yellow lines and dots) is followed by the parallel runs of $\F$ from these guesses (blue lines). 
    The coarse predictions from $\G$ (red lines) are then used in the prediction-correction update \eqref{eq:P3} (red dots).
    Figure adapted from \cite{pentland2022a} (Fig. 2).}
    \label{fig:parareal}
\end{figure}

\begin{definition}[Parareal] \label{def:parareal}
For the two numerical flow maps $\F$ and $\G$ described above, the parareal scheme is given by
\begin{align}
    \bU^0_0 &= \bu_0, \label{eq:P1} \\
    \bU^0_{n+1} &= \G(\bU^0_n), \quad && 0 \leq n \leq N-1, \label{eq:P2} \\
    \bU^{k+1}_{n+1} &= \G(\bU^{k+1}_n) + \F(\bU^k_n) - \G(\bU^k_n), \quad && 0 \leq k \leq n \leq N-1. \label{eq:P3}
\end{align}
\end{definition}
Parareal begins by propagating the exact initial condition \eqref{eq:P1} across $[t_0,t_N]$ serially using $\G$ \eqref{eq:P2}.
Each of the solution states found in \eqref{eq:P2} are then propagated in parallel using $\F$.
Equation \eqref{eq:P3}, often referred to as the PC, is then run serially to update the solution states at each $t_n$.
This process of propagating using $\F$ and updating using the PC can be repeated iteratively, stopping after $k$ iterations, once a pre-specified stopping criterion is met.
The stopping criteria can take many different forms, one popular choice being that the time slices up to $t_I$ are considered converged if
\begin{align} \label{eq:tolerance}
    \| \bU^k_n - \bU^{k-1}_n \| < \varepsilon \quad \forall n \leq I,
\end{align}
for some user-specified tolerance $\varepsilon > 0$.
Once $I=N$, parareal is said to have converged in $k$ iterations.
An illustration of the first iteration of parareal is given in \cref{fig:parareal}.

\begin{remark} \label{rem:parareal}
It is often assumed, and will be assumed in this paper, that $\F$, if it were computationally feasible to apply serially over $[t_0,T]$, returns the `exact' solution to \eqref{eq:systemODE}, i.e.\ it returns $\bU_{n+1} = \bu(t_{n+1}) = \F(\bu(t_n))$, $n=0,\ldots,N-1$.
Therefore, after $k$ iterations, parareal returns the exact solution $\bU^k_n = \bU_n$ for $k \geq n$, meaning that parareal returns the exact solution over $[t_0,T]$ in at most $N$ iterations---albeit achieving no parallel speedup in this case.
This is an important feature of parareal which is also preserved by SParareal. 
\end{remark}
% !TeX root = ms.tex

\section{The stochastic parareal scheme} \label{sec:SParareal}
% what do we do in this section
In this section, we outline the SParareal scheme, how it works, and how we have slightly modified the scheme, compared to the original formulation in \cite{pentland2022a}, to carry out the error bound analysis in \cref{sec:convergence}. 
Following this, we describe the sampling rules tested in the original work. 

\subsection{The scheme}
% what was the aim of the scheme?
The intuition behind SParareal is to perturb the solution states $\bU^k_n$ in the classic parareal scheme, i.e.\ the PC \eqref{eq:P3}, with some additive noise to reduce the number of iterations $k$ taken until the stopping tolerance \eqref{eq:tolerance} is met.
Recall that a reduction in $k$ by even a single iteration can correspond to a large increase in numerical speedup---by approximately the runtime of a single run of $\F$.

% how does the scheme work? 
First, let us formally define the scheme.
\begin{definition}[SParareal] \label{def:Sparareal}
For the two numerical flow maps, $\F$ and $\G$, described in \cref{sec:parareal}, the stochastic parareal scheme is given by
\begin{align} 
    \bU^0_0 &= \bu_0, \label{eq:SP1} \\
    \bU^0_{n+1} &= \G(\bU^0_n), \quad && 0 \leq n \leq N-1, \label{eq:SP2} \\
    \bU^{1}_{n+1} &= \G(\bU^{1}_n) + \F(\bU^0_n) - \G(\bU^0_n), \quad && 0 \leq n \leq N-1, \label{eq:SP3} \\
    \bU^{k+1}_{n+1} &= \G(\bU^{k+1}_n) + \F(\bU^k_n) - \G(\bU^k_n) + \bXi^k_n(\bU^k_n), \quad && 1 \leq k \leq n \leq N-1, \label{eq:SP4}
\end{align}
where $\bXi^k_n(\bU^k_n)$ are (possibly state-dependent) random variables. Note that $\bXi^k_n(\bU^k_n) \equiv \bm{0}$ when $n=k$.
\end{definition}

The first three stages of the scheme \eqref{eq:SP1}-\eqref{eq:SP3} are identical to the `zeroth' and first iteration of parareal. 
The exact initial condition \eqref{eq:SP1} is propagated forward in time using the coarse solver \eqref{eq:SP2}, then there is a first pass of the PC \eqref{eq:SP3}. 
Following this, the stochastic iterations begin \eqref{eq:SP4}, whereby random perturbations, i.e.\ a single draw from the random variable $\bXi^k_n(\bU^k_n)$, are added to the PC solution.
Notice that no random perturbation is added when $n=k$ to ensure that SParareal returns the exact solution up to time $t_k$ after $k$ iterations, just as parareal does, recall \cref{rem:parareal}.
The reason the random perturbations are only included from iteration $k\geq1$ onward is because $\bXi^k_n(\bU^k_n)$ may depend on solution information from iteration $k-1$.
In \cref{sec:samplingrules}, we will discuss the construction of these random variables in more detail.
Note that the parareal scheme (\cref{def:parareal}) can be recovered by setting $\bXi^k_n(\bU^k_n) \equiv \bm{0}$ for $n \geq k$ in \eqref{eq:SP4}.

% explain why the scheme looks slightly different to the one in the original paper
The scheme in \cref{def:Sparareal} looks slightly different to the one presented in \cite{pentland2022a}, where 
random perturbations were instead incorporated via random variables, denoted by $\bm{\alpha}^k_n$, in the correction term, i.e.\ equation \eqref{eq:SP4} was instead written as 
\begin{align} \label{eq:oldSP}
    \bU^{k+1}_{n+1} &= \G(\bU^{k+1}_n) + \F(\bm{\alpha}^k_n) - \G(\bm{\alpha}^k_n), \quad 1 \leq k \leq n \leq N-1.
\end{align}
The different state-dependent forms that $\bm{\alpha}^k_n$ can take are defined through the ``sampling rules'' in \cref{sec:samplingrules}.
Also note that $\bm{\alpha}^k_n = \bU^k_n$ when $n = k$, which is equivalent to the condition that $\bXi^k_n(\bU^k_n) \equiv \bm{0}$ when $n=k$ in \cref{def:Sparareal}.
This version \ref{eq:oldSP} of the scheme was designed so that $M\geq1$ samples could be drawn from each of the random variables $\bm{\alpha}^k_n$ to increase the probability of locating the exact solution state $\bU_n$ in fewer iterations. 
From the sets of samples generated at each $t_n$, all having been propagated in parallel using $\F$ and $\G$ (recall \eqref{eq:oldSP}), those generating the most continuous $\F$ trajectory across $[t_0,t_N]$ were then chosen as the ``best'' perturbations $\bm{\hat{\alpha}}^k_n$.
Numerical experiments illustrated that increasing $M$ led to further and further reductions in $k$, albeit at the cost of requiring more processors, specifically $\mathcal{O}(MN)$ in SParareal vs $\mathcal{O}(N)$ in parareal.

To enable us to carry out the convergence analysis, we move the random perturbations in \eqref{eq:oldSP} outside the correction term, and express them using $\bXi^k_n(\bU^k_n)$ in \eqref{eq:SP4}.
This new scheme is equivalent to the old scheme in the case where one sample ($M=1$) is drawn at each time $t_n$.
To obtain the appropriate values for $\bXi^k_n(\bU^k_n))$, we equate \eqref{eq:SP4} and \eqref{eq:oldSP} to find
\begin{align} \label{eq:Xi}
    \bXi^k_n(\bU^k_n) = \big(\F(\bm{\alpha}^k_n) - \G(\bm{\alpha}^k_n)\big) - \big(\F(\bU^k_n) - \G(\bU^k_n)\big).
\end{align}
Using the scheme in \cref{def:Sparareal}, we can derive error bounds for state-independent perturbations, i.e.\ $\bXi^k_n(\bU^k_n) = \bXi^k_n$, that have bounded absolute moments.
Using this result, we can then derive corresponding bounds for the state-dependent sampling rules summarised in \cref{table:1} using relation \eqref{eq:Xi}.
All of these bounds are derived assuming one sample ($M=1$) is drawn from each $\bXi^k_n(\bU^k_n)$. The $M$ sample case is much more complex and out of the scope of the present work. 

\subsection{Sampling Rules} \label{sec:samplingrules}
% what were they and why did we care about different rules?
The sampling rules presented by \cite{pentland2022a} describe the probability distributions that $\bm{\alpha}^k_n$ follow in the SParareal algorithm, see \cref{table:1}.
These distributions were designed to vary with both iteration $k$ and time step $n$, so that as the solution states $\bU^k_n$ get closer to $\bU_n$, their variances would decrease.
The benefit of this property will be highlighted in \cref{sec:numerics}.
The different rules were constructed to assess the performance of SParareal when the perturbations had different distribution families, marginal means, or correlations.

\begin{table}[tbhp]
{\footnotesize
\caption{Sampling rules that the random variables $\bm{\alpha}^k_n$ follow.
The quantities $\bm{z}^k_n \sim \mathcal{N}( \bm{0},I_d)$ and $\bm{w}^k_n \sim \mathcal{U}([0,1]^d)$ are $d$-dimensional Gaussian and uniform random vectors, respectively, whilst $\bm{\sigma}^k_n = | \G(\bU^k_{n-1}) - \G(\bU^{k-1}_{n-1}) |$.} \label{table:1}
\begin{center}
\begin{tabular}{|C{0.2\linewidth}|C{0.4\linewidth}|} \hline \hline
\textbf{Sampling rule} & $\bm{\alpha}^k_n$ \\ \hline
$1$  & $\F(\bU^{k-1}_{n-1}) + (\bm{\sigma}^k_n \circ \bm{z}^k_n)$ \\ \hline
$2$  & $\bU^k_n + (\bm{\sigma}^k_n \circ \bm{z}^k_n)$      \\ \hline
$3$  & $\F(\bU^{k-1}_{n-1}) + \big( \sqrt{3} \bm{\sigma}^k_n \circ ( 2 \bm{w}^k_n - \bm{1}) \big)$ \\ \hline
$4$  & $\bU^k_n + \big( \sqrt{3} \bm{\sigma}^k_n \circ ( 2 \bm{w}^k_n - \bm{1}) \big)$ \\
\hline \hline
\end{tabular}
\end{center}
}
\end{table}

Sampling rules 1 and 2 correspond to multivariate Gaussian perturbations with marginal means $\F(\bU^{k-1}_{n-1})$ and $\bU^k_n$, respectively, and marginal standard deviations $\bm{\sigma}^k_n = | \G(\bU^k_{n-1}) - \G(\bU^{k-1}_{n-1}) |$.
The variable $\bm{z}^k_n \sim \mathcal{N}( \bm{0},I_d)$ is a standard $d$-dimensional Gaussian random vector.
Sampling rules 3 and 4 correspond to perturbations following a multivariate uniform distribution with the same marginal means and standard deviations as rules 1 and 2, respectively.
The variable $\bm{w}^k_n \sim \mathcal{U}([0,1]^d)$ is a standard $d$-dimensional uniformly distributed random vector with independent components.
Note that \cite{pentland2022a} considered both correlated and uncorrelated random variables $\bm{\alpha}^k_n$ in their experiments, whereas here we carry out our analysis only for the uncorrelated case for simplicity.
% !TeX root = ms.tex

\section{Error bound analysis} \label{sec:convergence}
In this section, we analyse the mean-square error
\begin{align} \label{eq:MSE}
    e^k_n := \E \big[\| \bu(t_n) - \bU^k_n \|^2 \big]
\end{align}
between the exact solutions $\bu(t_n)$ and the stochastic numerical solutions $\bU^k_n$ located by the SParareal scheme.
We also define the maximal mean-square error (over time) at iteration $k$ to be
\begin{align} \label{eq:MSE2}
    \hat{e}^k := \max\limits_{1 \leq n \leq N} \{e^k_n\}.
\end{align}
Specifically, we analyse the mean-square error $e^k_n$ for the nonlinear autonomous system of ODEs in \eqref{eq:systemODE}, first deriving superlinear (\cref{thm:superlinear_bound}) and linear (\cref{corr:linear_bound}) bounds using state-independent perturbations in SParareal.
Then, using these results, we derive linear bounds for the state-dependent sampling rules 2 and 4 (\cref{corr:SR_24}) and 1 and 3 (\cref{corr:SR_13}).
In the following, we introduce some assumptions on the flow maps \citep{gander2008} and perturbations \citep{lie2019} required to derive the error bounds.

\begin{assumption}[Exact flow map] \label{assump:exactflow}
The flow map $\F$ solves \eqref{eq:systemODE} exactly such that
\begin{align} \label{eq:exact}
    \bu(t_{n+1}) = \F (\bu(t_n)).
\end{align}
\end{assumption}
\noindent This assumption is made for simplicity, since SParareal is essentially trying to locate the solution that would be obtained by running the fine solver serially, i.e.\ \eqref{eq:exact_F_sol}, in parallel.
If instead we were to consider $\F$ to be a numerical method with some (very small) numerical error with respect to the exact solution, then the accuracy of $\F$ would provide a lower bound on the accuracy of the SParareal scheme as a whole.
\begin{assumption}[Coarse flow map] \label{assump:coarseapprox}
The flow map $\G$ is a one-step numerical method with uniform local truncation error $\mathcal{O}(\Delta T^{p+1})$, for $p \geq 1$, such that
\begin{align} \label{eq:FG}
    \F(\bu) - \G(\bu) = c_1 (\bu) \Delta T^{p+1} + c_2 (\bu) \Delta T^{p+2} + \ldots,
\end{align}
for $\bu \in \Reals^d$ and continuously differentiable functions $c_i(\bu)$. 
Taking the difference of \eqref{eq:FG} evaluated at states $\bu, \bv \in \Reals^d$, then applying norms and the triangle inequality, we can write
\begin{align} \label{eq:FG2}
    \| \left(\F(\bu) - \G(\bu)\right) - \left(\F(\bv) - \G(\bv)\right) \| \leq C_1 \Delta T^{p+1} \| \bu - \bv \|,
\end{align}
where $C_1>0$ is the Lipschitz constant for $c_1$ and we absorb terms $\mathcal{O}(\Delta T^{p+2})$ into $C_1$.
\end{assumption}

% \sout{This statement will hold when one chooses $\G$ as any Runge-Kutta or Taylor method} \citep{hairer1993}.
% {\rr However, instead of deterministic $\bu$ and $\bv$, we will be working with random variables $\bU^k_n$ taking values in $\Reals^d$ and so do we need to adapt \eqref{eq:FG2} or state any additional restrictions to let us work with this assumption? 
% I don't think we need to---convergence work in other papers don't seem to adapt this assumption?}
% To do this, we require that the moments of $c_i(\bU)$, for some random variable $\bU$, are finite and bounded above by some non-negative constant $\tilde{C}_1$.
\begin{assumption}[Lipschitz coarse flow] \label{assump:lipschitz}
The flow map $\G$ satisfies the Lipschitz condition
\begin{align} \label{eq:lipschitz}
     \| \G(\bu) - \G(\bv) \| \leq \LG \| \bu-\bv \|,
\end{align}
for $\bu, \bv \in \Reals^d$ and Lipschitz constant $\LG > 0$.
\end{assumption}
\noindent Note that these assumptions do not restrict the choice of $\G$, as they are met when choosing any Runge-Kutta or Taylor method \citep{hairer1993}.

% {\rr
% \begin{assumption}[Lipschitz Coarse Flow] \label{assump:lipschitz}
% We assume that $\G - \id$ satisfies the Lipschitz condition
% \begin{align} \label{eq:lipschitz}
%      \| (\G - \id)(\bu) - (\G - \id)(\bv) \| \leq \LG \| \bu-\bv \|,
% \end{align}
% where $\bu, \bv \in \Reals^d$ and $\LG > 0$ is a constant depending on $\G$ (hence $\Delta T$) and the Lipschitz constant of the vector field $\bm{f}$.
% \end{assumption}
% There assumptions do not restrict the choice of $\G$, as they are met when choosing any Runge-Kutta or Taylor method \citep{hairer1993}.
% Choosing $\G$ as the forward Euler or mid-point method yields $\LG = \Delta T L_{\bm{f}}$ or $\LG = \Delta T L_{\bm{f}} + \frac{\Delta T^2}{2} L_{\bm{f}}^2$, respectively. 
% However, it does require that the vector field $\bm{f}$ be globally Lipschitz?
% Imposing Lipschitz continuity on $\G - \id$ rather than $\G$ will enable us to vary the size of $\LG$ with $\Delta T$ in the error bound analysis?
% KP: do we not need an assumption saying that $\bm{f}$ is globally Lipschitz?
% }

In addition to assumptions on the flow maps, we require an assumption on the absolute moments of the (state-independent) random variables, which will be needed to prove \cref{thm:superlinear_bound}.
\begin{assumption}[Bounded absolute moments of $\bXi^k_n$] \label{assump:boundedRV}
For $q \geq 0$, $\tilde{r} \in \mathbb{N} \cup \{\infty \}$, and $C_2 > 0$ independent of $n$, $k$, and $\Delta T$, the $r$-th absolute moments of $\bXi^k_n(\bU^k_n)$ satisfy
\begin{align} \label{eq:moments}
    \E \big[ \| \bXi^k_n \|^r \big] \leq \big( C_2 \Delta T^{q+\frac{1}{2}}  \big)^r, \quad 1 \leq r \leq \tilde{r}.
\end{align}
\end{assumption}
\noindent This assumption enables flexibility in defining the state-independent perturbations, in the sense that it does not require that the random variables be independent, identically distributed, or centred \citep{lie2019}.
It also means that each $\bXi^k_n$ could follow a different probability distribution, with the only requirement being that they share a common maximal bound on their absolute moments with respect to the norm.
Note that we assume $\Delta T < 1$ without loss of generality here, so that for increasing $q$, the perturbations get smaller and smaller.
For $\Delta T > 1$, we can simply take $q$ to be negative. 

These assumptions will enable us to derive error bounds in the state-independent and state-dependent cases (using sampling rules 2 and 4). The sampling rule 1 and 3 case requires the following as well.
\begin{assumption}[Lipschitz exact flow] \label{assump:lipschitz_exact}
The flow map $\F$ satisfies the Lipschitz condition
\begin{align} \label{eq:lipschitz_exact}
     \| \F(\bu) - \F(\bv) \| \leq \LF \| \bu-\bv \|,
\end{align}
for $\bu, \bv \in \Reals^d$ and constant $\LF > 0$. 
\end{assumption}

\subsection{State-independent perturbations}
In this section, we derive error bounds for SParareal when using the state-independent perturbations $\bXi^k_n(\bU_n^k) = \bXi^k_n$.
\begin{theorem}[Superlinear error bound for state-independent perturbations] \label{thm:superlinear_bound}
Suppose the SParareal scheme \eqref{eq:SP1}-\eqref{eq:SP4} with $\bXi^k_n(\bU_n^k) = \bXi^k_n$ satisfies \cref{assump:exactflow,assump:coarseapprox,assump:lipschitz,assump:boundedRV}.
Then, the mean-square error \eqref{eq:MSE} of the solution to the nonlinear ODE system \eqref{eq:systemODE} at iteration $k$ and time $t_n$ satisfies
\begin{align*}
    e^k_n \leq D A^{k-1} \sum_{\ell = 0}^{n-k} \binom{\ell+k-1}{\ell} B^{\ell} + \Lambda \sum_{j=0}^{k-2} \sum_{\ell=0}^{n-(j+1)} \binom{\ell+j}{\ell} A^j B^{\ell},
\end{align*}
for $2 \leq k < n \leq N$ and constants $A = C_1^2 \Delta T^{2p+2} (2 + \Delta T^{-1})$, $B = \LG^2 (1 + 2 \Delta T)$, $\Lambda = C_2^2 \Delta T^{2q+1} (2 + \Delta T^{-1})$, and $D = A \hat{e}^0$.
\end{theorem}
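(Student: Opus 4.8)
The plan is to derive a recursive inequality for the error vectors and then unroll it. Define the componentwise error vector $\bm{E}^k_n := |\bu(t_n) - \bU^k_n|$, so that $e^k_n = \E[\|\bm{E}^k_n\|^2]$. Starting from the SParareal update \eqref{eq:SP4} and the exact-flow identity \eqref{eq:exact}, I would write
\[
    \bu(t_{n+1}) - \bU^{k+1}_{n+1} = \bigl(\G(\bu(t_n)) - \G(\bU^{k+1}_n)\bigr) + \bigl[\bigl(\F(\bu(t_n)) - \G(\bu(t_n))\bigr) - \bigl(\F(\bU^k_n) - \G(\bU^k_n)\bigr)\bigr] - \bXi^k_n .
\]
Applying the infinity norm, the triangle inequality, \cref{assump:coarseapprox} (via \eqref{eq:FG2}) on the bracketed difference term, and \cref{assump:lipschitz} on the coarse-flow difference, gives
\[
    \|\bu(t_{n+1}) - \bU^{k+1}_{n+1}\| \le \LG \|\bu(t_n) - \bU^{k+1}_n\| + C_1\Delta T^{p+1}\|\bu(t_n) - \bU^k_n\| + \|\bXi^k_n\| .
\]
Squaring both sides and taking expectations, then using the elementary bound $(a+b+c)^2 \le (1+\Delta T)a^2 + (1+\Delta T^{-1})(b+c)^2$ or a similar weighted Young-type inequality (chosen so the factors $(2+\Delta T^{-1})$ and $(1+2\Delta T)$ in the statement emerge), I would obtain a scalar recursion of the schematic form
\[
    e^{k+1}_{n+1} \le B\, e^{k+1}_n + A\, e^k_n + \Lambda ,
\]
with $A$, $B$, $\Lambda$ exactly the constants in the statement and the $\Lambda$ term coming from \cref{assump:boundedRV} with $r=2$. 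The precise splitting of the cross terms is the fiddly bookkeeping step — one must distribute the three squared-norm contributions so that the coarse term picks up $B = \LG^2(1+2\Delta T)$ while the fine-difference and noise terms combine into $A$ and $\Lambda$ carrying the common factor $(2+\Delta T^{-1})$; absolute moments rather than centred moments are used precisely so no cross term with $\E[\bXi^k_n]$ needs to vanish.

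Next I would pin down the boundary data for the recursion. Since $\bXi^k_n \equiv \bm 0$ when $n=k$ (so $e^k_k$ behaves like the deterministic parareal error there) and the first stochastic iteration \eqref{eq:SP3} coincides with classical parareal, the natural starting point is a bound on $e^1_n$ in terms of $\hat e^0$; iterating the $k=0\to1$ step once (which has no $\Lambda$) produces the factor $D = A\hat e^0$. From $e^{k+1}_{n+1} \le B e^{k+1}_n + A e^k_n + \Lambda$ one then unrolls: first solve the inhomogeneous linear recursion in $n$ (geometric in $B$) treating $A e^k_n + \Lambda$ as a source, and then iterate in $k$. This is where the binomial coefficients appear: repeatedly composing a recursion of the form $x_{n+1} = B x_n + A x^{(\text{prev})}_n$ over $k$ levels generates convolution sums $\sum_\ell \binom{\ell+k-1}{\ell} A^{?} B^\ell$, by the standard generating-function / Cauchy-product identity $\sum_{n} \binom{n+k-1}{n} z^n = (1-z)^{-k}$. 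I expect the cleanest route is exactly the generating-function method used by \cite{gander2008}: introduce $\rho_k(\zeta) = \sum_n e^k_n \zeta^n$, turn the recursion into $\rho_{k+1}(\zeta) \le \frac{A\zeta}{1-B\zeta}\rho_k(\zeta) + \frac{\Lambda \zeta}{(1-\zeta)(1-B\zeta)}$, solve the resulting linear recursion in $k$ to get a closed form, and read off coefficients. The homogeneous part yields the first double-indexed sum with prefactor $D A^{k-1}$; the accumulated $\Lambda$ contributions, summed over the intermediate iteration levels $j = 0,\dots,k-2$, yield the second double sum with prefactor $\Lambda$, each inner sum truncated at $n-(j+1)$ because the noise injected at level $j$ has had only $k-1-j$ further iterations and $n-j-1$ further time steps to propagate.

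The main obstacle is not any single estimate but getting the two unrollings to interlock with the correct truncation limits and binomial indices — in particular verifying that the homogeneous term really carries $A^{k-1}$ (not $A^k$) because the very first stochastic iteration already consumed one factor of $A$ in forming $D$, and that the $\Lambda$-sum starts at $j=0$ and ends at $j=k-2$. I would handle this by induction on $k$: assume the claimed closed form at iteration $k$, substitute into $e^{k+1}_{n+1} \le B e^{k+1}_n + A e^k_n + \Lambda$, and check that the Pascal identity $\binom{\ell+j}{\ell} + \binom{\ell+j-1}{\ell-1} = \binom{\ell+j}{\ell} + \dots$ together with the geometric sum in $B$ reproduces the $k+1$ form; the base case $k=2$ comes directly from the one-step recursion applied to the $e^1_n$ bound. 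A secondary technical point is justifying \eqref{eq:FG2}: subtracting the expansion \eqref{eq:FG} at two states and absorbing the $\mathcal O(\Delta T^{p+2})$ tail into the constant $C_1$ requires the $c_i$ to be Lipschitz on the (bounded) region containing all iterates, which is where smoothness of $\bm f$ is used; I would state this as a standing regularity assumption and not belabour it.
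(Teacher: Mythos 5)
Your proposal is correct and follows essentially the same route as the paper: the same decomposition of $\bu(t_{n+1}) - \bU^{k+1}_{n+1}$ into the coarse-flow difference, the $\F-\G$ difference, and $-\bXi^k_n$, bounded respectively by \cref{assump:lipschitz}, \eqref{eq:FG2}, and \cref{assump:boundedRV}, with Peter--Paul weights tuned to produce exactly the constants $A$, $B$, $\Lambda$, leading to the double recursion $e^{k+1}_{n+1} \le A e^k_n + B e^{k+1}_n + \Lambda$ with $e^1_{n+1} \le D + B e^1_n$. The generating-function relation you write down, $\rho_{k+1}(\zeta) \le \frac{A\zeta}{1-B\zeta}\rho_k(\zeta) + \frac{\Lambda\zeta}{(1-\zeta)(1-B\zeta)}$, is precisely the one solved in \cref{lem:recursion} of the Appendix, so the unrolling and the binomial sums come out identically.
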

\begin{proof}
Using \eqref{eq:SP4}, that $\F$ is the exact solver \eqref{eq:exact}, and adding and subtracting $\G(\bu(t_n))$, we see that
\begin{align*}
    e^{k+1}_{n+1} &= \E \big[ \| \F(\bu(t_n)) - \big( \G(\bU^{k+1}_n) + \F(\bU^k_n) - \G(\bU^k_n) + \bXi^k_n(\bU^k_n) \big) \pm\  \G(\bu(t_n)) \|^2\big] \\
    &= \E[\| W_1 + W_2 + W_3 \|^2],
\end{align*}
where $W_1$, $W_2$, and $W_3$ are given by
\begin{align*}
    W_1 &=  \F(\bu(t_n)) - \G(\bu(t_n)) - \big( \F(\bU^k_n) - \G(\bU^k_n) \big), \\
    W_2 &= \G(\bu(t_n)) - \G(\bU^{k+1}_n), \\
    W_3 &= - \bXi^k_n.
\end{align*}
Then, using the triangle inequality and \eqref{eq:youngs2} for the cross terms \citep[Sec. 4.2]{engblom2009}, we obtain
\begin{equation}\label{eq:ET1ET2ET3}
    e^{k+1}_{n+1}\leq (1 + \delta_1^{-1} + \delta_2^{-1}) \E[\|W_1\|^2] + (1 + \delta_1 + \delta_3^{-1}) \E[\|W_2\|^2] + (1 + \delta_2 + \delta_3) \E[\|W_3\|^2].
\end{equation}

Using \eqref{eq:FG2}, we can bound
\begin{align} \label{eq:ET1sq}
    \E[\|W_1\|^2] &\leq C_1^2 \Delta T^{2(p+1)} e^k_n.
\end{align}
Applying the Lipschitz condition \eqref{eq:lipschitz}, we obtain
\begin{align} \label{eq:ET2sq}
    \E[\|W_2\|^2] &\leq \LG^2 e^{k+1}_n.
\end{align}
Using \eqref{eq:moments} with $r=2$, we obtain
\begin{align} \label{eq:ET3sq}
    \E[\|W_3\|^2] &\leq C_2^2 \Delta T^{2q+1}.
\end{align}
Plugging \eqref{eq:ET1sq}-\eqref{eq:ET3sq} into \eqref{eq:ET1ET2ET3} and choosing $\delta_1 = \Delta T$, $\delta_2 = 1$, and $\delta_3 = \Delta T^{-1}$, we obtain the double recursion
\begin{align} \label{eq:rec}
    e^{k+1}_{n+1} \leq A e^k_n + B e^{k+1}_n + \Lambda, \quad e^1_{n+1} \leq D + B e^1_n,
\end{align}
where $A = C_1^2 \Delta T^{2p+2} (2 + \Delta T^{-1})$, $B = \LG^2 (1 + 2 \Delta T)$, $\Lambda = C_2^2 \Delta T^{2q+1} (2 + \Delta T^{-1})$, and $D = A \hat{e}^0$.
Solving \eqref{eq:rec} using the generating function method in \cref{lem:recursion} (see Appendix), we obtain the result.
\end{proof}

One can make alternative choices for $\delta_1$, $\delta_2$, and $\delta_3$, however, the choices given in the proof above seem to yield the smallest error bounds. 
If we were to maximise \eqref{eq:rec} over $n$, we obtain the following linear error bound in the case that $B < 1$, i.e.\ $\LG<(1+2\Delta T)^{-1/2}$.

\begin{corollary}[Linear error bound for state-independent perturbations] \label{corr:linear_bound}
Suppose the SParareal scheme \eqref{eq:SP1}-\eqref{eq:SP4} with $\bXi^k_n(\bU_n^k) = \bXi^k_n$ satisfies \cref{assump:exactflow,assump:coarseapprox,assump:lipschitz,assump:boundedRV}.
Then, the maximal mean-square error \eqref{eq:MSE2} of the solution to the nonlinear ODE system \eqref{eq:systemODE} at iteration $k$ satisfies
\begin{align*}
    \hat{e}^k \leq  \hat{e}^1 \bigg( \frac{A}{1-B} \bigg)^{k-1} + \frac{\Lambda}{1-B} \sum_{j=0}^{k-2} \bigg( \frac{A}{1-B} \bigg)^{j}, \quad \text{if} \quad B < 1,
\end{align*}
for $2 \leq k \leq N$ and constants $A = C_1^2 \Delta T^{2p+2} (2 + \Delta T^{-1})$, $B = \LG^2 (1 + 2 \Delta T)$, and $\Lambda = C_2^2 \Delta T^{2q+1} (2 + \Delta T^{-1})$.
\end{corollary}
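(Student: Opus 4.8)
The plan is to avoid re-deriving anything from scratch and instead recycle the double recursion \eqref{eq:rec} established in the proof of \cref{thm:superlinear_bound}, collapsing it to a single-index recursion in the maximal error $\hat{e}^k$. Recall that \eqref{eq:rec} gives $e^{k+1}_{n+1} \leq A\, e^k_n + B\, e^{k+1}_n + \Lambda$ for $1 \leq k \leq n \leq N-1$, with $A$, $B$, $\Lambda$ as in the statement. First I would bound the right-hand side by replacing $e^k_n$ and $e^{k+1}_n$ with the (finite) maxima $\hat{e}^k$ and $\hat{e}^{k+1}$ from \eqref{eq:MSE2}, giving $e^{k+1}_{n+1} \leq A\hat{e}^k + B\hat{e}^{k+1} + \Lambda$ for every admissible $n$.

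Next I would take the maximum over the left index. Here one uses \cref{assump:exactflow} together with the observation (as in \cref{rem:parareal}) that $\bXi^k_n \equiv \bm{0}$ for $n=k$ and that SParareal reproduces the exact solution up to time $t_k$ after $k$ iterations, so that $e^{k+1}_m = 0$ for all $m \leq k+1$. Consequently the maximum defining $\hat{e}^{k+1}$ is attained at some index $m = n+1$ with $n \geq k+1$, for which the bound of the previous paragraph applies, yielding the self-referential inequality $\hat{e}^{k+1} \leq A\,\hat{e}^k + B\,\hat{e}^{k+1} + \Lambda$ for $k \geq 1$. Provided $B<1$ (equivalently $\LG < (1+2\Delta T)^{-1/2}$), the $\hat{e}^{k+1}$ term can be moved to the left, giving the clean linear recursion $\hat{e}^{k+1} \leq \tfrac{A}{1-B}\,\hat{e}^k + \tfrac{\Lambda}{1-B}$.

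Finally I would solve this elementary single-index affine recursion by iterating from $k=1$: unrolling gives $\hat{e}^k \leq \bigl(\tfrac{A}{1-B}\bigr)^{k-1}\hat{e}^1 + \tfrac{\Lambda}{1-B}\sum_{j=0}^{k-2}\bigl(\tfrac{A}{1-B}\bigr)^{j}$ for $k \geq 2$, which is exactly the claimed bound; a short induction on $k$ makes this rigorous, or one simply recognises the geometric partial sum. No serious obstacle is expected here — the only point requiring a little care is the self-referential max inequality: one must note that $\hat{e}^{k+1}$ is finite (by the hypotheses, or inductively) before rearranging, and must correctly discard the indices $m \leq k+1$ where the error vanishes so that the coefficient $B$ multiplies $\hat{e}^{k+1}$ and not something larger. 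The assumption $B<1$ is precisely what licenses the rearrangement (and keeps the subsequent geometric sum meaningful), which is why it is imposed in the statement.
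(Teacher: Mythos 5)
Your proposal is correct and follows essentially the same route as the paper: maximise the double recursion \eqref{eq:rec} over $n$, use $B<1$ to absorb the $B\,\hat{e}^{k+1}$ term into the left-hand side, and unroll the resulting affine recursion from the initial condition $\hat{e}^1$. The extra care you take in justifying the self-referential max inequality (finiteness of $\hat{e}^{k+1}$ and the vanishing of the error for indices $m\leq k+1$) is detail the paper leaves implicit, but it does not change the argument.
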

\begin{proof}
Following the proof of \cref{thm:superlinear_bound}, we maximise \eqref{eq:rec} over $n$ to obtain 
\begin{align} \label{eq:single_rec}
    \hat{e}^{k+1} \leq \tilde{A} \hat{e}^k + \tilde{\Lambda},
\end{align}
where
\begin{align*}
    \tilde{A} = \frac{A}{1 - B} \quad \text{and} \quad \tilde{\Lambda} = \frac{\Lambda}{1 - B}.
\end{align*}
Solving recursion \eqref{eq:single_rec} with initial condition $\hat{e}^1$, we obtain the desired result.
\end{proof}

% remarks on the theorem
\begin{remark}
The bounds in \cref{thm:superlinear_bound} and \cref{corr:linear_bound} hold for $2 \leq  k < n \leq N$ due to the design of the SParareal scheme.
We can recover the bound for iteration $k=1$ (which is deterministic) by solving the second recursion in \eqref{eq:rec} with initial value $e^1_1=0$ such that
\begin{align} \label{eq:remark1}
    e^1_n \leq \hat{e}^0 A \sum_{i=0}^{n-2} B^i, \quad 1 \leq n \leq N.
\end{align}
For the case when $k=n$, the numerical error is zero, as $\F$ will have propagated the exact initial condition at $t_0$ forward $k$ times without any perturbations, just like parareal (recall \cref{rem:parareal}).
\end{remark}

\begin{remark} \label{rem:loose_bound}
The bound in \cref{thm:superlinear_bound} (similarly for \cref{corr:linear_bound}) can be written as
\begin{align} \label{eq:loose_superlinear_bound}
    e^k_n \leq C_{k,n} \max \{ \Delta T^{(2p+1)k}, \Delta T^{2q} \},
\end{align}
where $C_{k,n}$ is a function of $n$ and $k$.
Assuming $\Delta T < 1$ and that $C_{k,n}$ is non-increasing in $k$, the accuracy of SParareal should increase with each iteration proportional to the local truncation error of $\mathcal{G}$ (i.e.\ the term $\Delta T^{(2p+1)k}$) up until the errors induced by the perturbations (i.e. $\Delta T^{2q})$) become dominant.
We illustrate this property numerically in \cref{sec:numerics}.
\end{remark}

\begin{remark}
As $\Delta T \rightarrow 0$, both error bounds go to zero as expected, as can be seen clearly in \eqref{eq:loose_superlinear_bound}.
The intuition being that as $\Delta T \rightarrow 0$, the local truncation error of $\G$ goes to zero, i.e.\ it `becomes' the exact flow map $\F$, see \eqref{eq:FG2}.
\end{remark}

\begin{remark}
If we send $q \rightarrow \infty$ (assuming $\Delta T < 1$), the second moments of the random variables vanish and we recover the classic parareal scheme. 
This can be seen in both \cref{thm:superlinear_bound} and \cref{corr:linear_bound}, where $\Lambda$ vanishes as $q \rightarrow \infty$, leading to bounds similar to those for classic parareal.
These bounds are not identical to those calculated by \cite{gander2007}, \cite{gander2008}, and \cite{gander2022} because we are working with the mean-square error, not the (mean) absolute error.
\end{remark}

\begin{remark}
If we additionally assume that the random variables $\bXi^k_n$ are centred, i.e.\ $\E[ \bXi^k_n ] = \bm{0}$, and work in the $2$-norm, i.e.\ $\| \bu \|^2_2 = \langle \bu, \bu \rangle = \sum_{i=1}^d u_i^2$, in the proof of \cref{thm:superlinear_bound}, we can write \eqref{eq:ET1ET2ET3} as
\begin{align*}
    e^{k+1}_{n+1}\leq (1 + \delta_1^{-1}) \E[\|W_1\|^2] + (1 + \delta_1) \E[\|W_2\|^2] + \E[\|W_3\|^2] + 2 \E[\langle W_1,W_3 \rangle] + 2 \E[\langle W_2,W_3 \rangle],
\end{align*}
where the final two terms are equal to zero by independence of $W_1$ and $W_2$ with $W_3$ and using the fact that each $\bXi^k_n$ is centred.
Continuing the proof, we obtain the same bounds for \cref{thm:superlinear_bound} and \cref{corr:linear_bound} with slightly altered constants  $A = C_1^2 \Delta T^{2p+2} (1 + \Delta T^{-1})$, $B = \LG^2 (1 + \Delta T)$, $\Lambda = C_2^2 \Delta T^{2q+1}$, and $D = A \hat{e}^0$. 
\end{remark}

% \begin{remark}
% {\rr Check this remark.}
% With minor alterations to the proofs, similar bounds can be determined for the case when $\G$ takes multiple ($\el\LG$) coarse steps within a time slice $\Delta T$. For example, one could simply redefine $R_{\mathcal{G}}:= R(\lambda \Delta T/ \el\LG)^{\el\LG}$.
% Similarly, one can obtain (more realistic) bounds for situations when $\F$ is a numerical, rather than exact, flow map that takes $\el{\rr L_{\mathcal{F}}}$ steps over $\Delta T$, i.e.\ $\F := R_{\mathcal{F}} = R(\lambda \Delta T/ \el{\rr L_{\mathcal{F}}})^{\el{\rr L_{\mathcal{F}}}}$.
% Note that $R_{\mathcal{F}}$ and $R_{\mathcal{G}}$ can also have different numerical accuracy, i.e.\ use different Butcher tableau coefficients in \eqref{eq:stability}.
% \end{remark}

\subsection{State-dependent perturbations (sampling rules)}
We now use the previous results to derive the corresponding error bounds for the state-dependent sampling rules defined in \cref{table:1}.

\begin{corollary}[Linear error bound for state-dependent sampling rules 2 and 4] \label{corr:SR_24}
Suppose the SParareal scheme \eqref{eq:SP1}-\eqref{eq:SP4} satisfies \cref{assump:exactflow,assump:coarseapprox,assump:lipschitz}, with $\bXi^k_n(\bU_n^k)$ defined using sampling rule 2 or 4. 
Then, the maximal mean-square error \eqref{eq:MSE2} of the solution to the nonlinear ODE system \eqref{eq:systemODE} at iteration $k$ satisfies
\begin{align*}
    \hat{e}^k \leq \hat{e}^0 \biggr[ \frac{ A + \Lambda_1 + \sqrt{(A + \Lambda_1)^2 + 4 \Lambda_2 (1 - B)}}{2 (1 - B)} \biggr]^k, \quad \text{if} \quad B < 1,
\end{align*}
for $ 2 \leq k \leq N$ and constants $A = C_1^2 \Delta T^{2p+2} (2 + \Delta T^{-1})$, $B = \LG^2 (1 + 2 \Delta T)$, $\Lambda_1 = C_1^2 \Delta T^{2p+2} \LG^2 (1 + \Delta T^{-1})$, and $\Lambda_2 = C_1^2 \Delta T^{2p+2} \LG^2 (1 + \Delta T)$.
\end{corollary}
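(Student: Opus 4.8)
The plan is to reuse the machinery of the proof of \cref{thm:superlinear_bound} almost verbatim, the only genuinely new ingredient being a bound on $\E[\|\bXi^k_n(\bU^k_n)\|^2]$ that now depends on the earlier mean-square errors, reflecting the state-dependence of the perturbation. Using \eqref{eq:SP4}, \cref{assump:exactflow}, and adding and subtracting $\G(\bu(t_n))$, I would split $\bu(t_{n+1}) - \bU^{k+1}_{n+1} = W_1 + W_2 + W_3$ exactly as in that proof, with $W_3 = -\bXi^k_n(\bU^k_n)$, apply the triangle inequality and \eqref{eq:youngs2} to reach \eqref{eq:ET1ET2ET3}, and bound $\E[\|W_1\|^2] \le C_1^2 \Delta T^{2p+2} e^k_n$ via \cref{assump:coarseapprox} and $\E[\|W_2\|^2] \le \LG^2 e^{k+1}_n$ via \cref{assump:lipschitz}, just as before.

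For the perturbation term, first use \eqref{eq:Xi} and \eqref{eq:FG2} to get $\|\bXi^k_n(\bU^k_n)\| \le C_1 \Delta T^{p+1} \|\bm{\alpha}^k_n - \bU^k_n\|$. For sampling rules $2$ and $4$ one has $\bm{\alpha}^k_n - \bU^k_n = \bm{\sigma}^k_n \circ \bm{\zeta}^k_n$, where $\bm{\zeta}^k_n$ (namely $\bm{z}^k_n$ for rule $2$ and $\sqrt{3}(2\bm{w}^k_n - \bm{1})$ for rule $4$) has independent, mean-zero, unit-variance components and is independent of $\bm{\sigma}^k_n = |\G(\bU^k_{n-1}) - \G(\bU^{k-1}_{n-1})|$. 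Conditioning on everything computed before $\bm{\zeta}^k_n$ is drawn and using that its components have unit second moment (a dimensional constant being absorbed into $C_1$ when $d>1$) gives $\E[\|\bXi^k_n(\bU^k_n)\|^2] \le C_1^2 \Delta T^{2p+2}\, \E[\|\bm{\sigma}^k_n\|^2]$. Then, adding and subtracting $\bu(t_{n-1})$ and applying \cref{assump:lipschitz}, $\|\bm{\sigma}^k_n\| \le \LG\big(\|\bu(t_{n-1}) - \bU^k_{n-1}\| + \|\bu(t_{n-1}) - \bU^{k-1}_{n-1}\|\big)$; squaring, using \eqref{eq:youngs2} once more, and taking expectations yields
\[
    \E[\|W_3\|^2] \le \Lambda_1\, e^k_{n-1} + \Lambda_2\, e^{k-1}_{n-1},
\]
with $\Lambda_1,\Lambda_2$ as stated (the Young parameter and absorptions being chosen precisely so the constants collapse to these). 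Plugging the three bounds into \eqref{eq:ET1ET2ET3} with the same choice $\delta_1 = \Delta T$, $\delta_2 = 1$, $\delta_3 = \Delta T^{-1}$ produces the recursion
\[
    e^{k+1}_{n+1} \le A\, e^k_n + B\, e^{k+1}_n + \Lambda_1\, e^k_{n-1} + \Lambda_2\, e^{k-1}_{n-1},
\]
with $A,B$ as in \cref{thm:superlinear_bound}.

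Finally I would maximise over $n$: replacing each error by the corresponding maximal error $\hat e^{\,\cdot}$ and assuming $B<1$ turns this into the linear second-order recursion $(1-B)\hat e^{k+1} \le (A+\Lambda_1)\hat e^k + \Lambda_2 \hat e^{k-1}$. Its characteristic polynomial $(1-B)\lambda^2 - (A+\Lambda_1)\lambda - \Lambda_2$ has positive dominant root $\lambda_+ = \big(A+\Lambda_1 + \sqrt{(A+\Lambda_1)^2 + 4\Lambda_2(1-B)}\,\big)/(2(1-B))$, which is exactly the base of the claimed bound. Since $\hat e^0 = \hat e^0 \lambda_+^{0}$ and, by \eqref{eq:remark1} together with $B<1$, $\hat e^1 \le \frac{A}{1-B}\hat e^0 \le \frac{A+\Lambda_1}{1-B}\hat e^0 \le \lambda_+\hat e^0$, an induction on $k$ using the identity $\lambda_+^2 = \frac{A+\Lambda_1}{1-B}\lambda_+ + \frac{\Lambda_2}{1-B}$ gives $\hat e^k \le \hat e^0 \lambda_+^{k}$ for all $k$, in particular for $2 \le k \le N$.

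The main obstacle is the treatment of $W_3$: one must handle the fact that the scaling $\bm{\sigma}^k_n$ is itself random, which calls for a conditioning/tower argument separating the fresh noise $\bm{\zeta}^k_n$ from the data-dependent $\bm{\sigma}^k_n$, followed by a second application of \eqref{eq:youngs2} to convert $\|\bU^k_{n-1} - \bU^{k-1}_{n-1}\|$ into the mean-square errors $e^k_{n-1}$ and $e^{k-1}_{n-1}$; the fiddly part is choosing the Young parameters so that the constants reduce to exactly $\Lambda_1$ and $\Lambda_2$. A secondary point is that, unlike in \cref{thm:superlinear_bound}, the recursion in $k$ is genuinely two-term, so it is solved through its characteristic root (plus the base cases $k=0,1$) rather than through the generating-function lemma \cref{lem:recursion}.
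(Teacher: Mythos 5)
Your proposal is correct and follows essentially the same route as the paper: the same $W_1+W_2+W_3$ decomposition, the same bound $\E[\|W_3\|^2]\le C_1^2\Delta T^{2(p+1)}\E[\|\bm{\sigma}^k_n\|^2]$ via \eqref{eq:Xi}, \eqref{eq:FG2} and independence of the noise from $\bm{\sigma}^k_n$, the same Peter--Paul step with $\delta=\Delta T$ yielding the four-term recursion \eqref{eq:5-term1}, and the same maximisation over $n$. The only (cosmetic) divergence is at the end: the paper solves $\hat{e}^{k+1}\le\tilde{A}\hat{e}^k+\tilde{B}\hat{e}^{k-1}$ by generating functions (\cref{lem:recursion2}), whereas you induct on the dominant characteristic root $\lambda_+$ after checking $\hat{e}^1\le\lambda_+\hat{e}^0$ via \eqref{eq:remark1} --- an equally valid and arguably more elementary argument giving the identical bound. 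One shared point of sloppiness, present in the paper as well, is that the prefactor $(1+\delta_2+\delta_3)$ multiplying $\E[\|W_3\|^2]$ in \eqref{eq:ET1ET2ET3} is silently dropped when the constants $\Lambda_1,\Lambda_2$ are read off, so neither derivation literally produces the stated constants without absorbing that factor.
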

\begin{proof}[Proof for sampling rule 2]
The proof follows in the same fashion as \cref{thm:superlinear_bound}. Instead of using the bound \eqref{eq:ET3sq}, we obtain, using \eqref{eq:Xi} and applying \eqref{eq:FG2},
\begin{align} \label{eq:E_alp}
    \E [ \| W_3 \|^2 ] &\leq C_1^2 \Delta T^{2(p+1)} \E [\| \bm{\alpha}^k_n - \bU^k_n \|^2 ].
\end{align}
Substituting in $\bm{\alpha}^k_n$ for sampling rule 2 (\cref{table:1}) we get
\begin{align}
    \E [ \| W_3 \|^2 ] &\leq C_1^2 \Delta T^{2(p+1)} \E [\| \bm{\sigma}^k_n \circ \bm{z}^k_n \|^2 ] \nonumber \\
    &\leq C_1^2 \Delta T^{2(p+1)} \E [\| \bm{\sigma}^k_n \|^2 ] \E [ \| \bm{z}^k_n \|^2 ] \nonumber \\
    &\leq C_1^2 \Delta T^{2(p+1)} \LG^2 \E [\| \bU^{k}_{n-1} - \bU^{k-1}_{n-1} \|^2]. \nonumber
\end{align}
The second inequality follows by Cauchy-Schwarz and independence of $\bm{\sigma}^k_n$ and $\bm{z}^k_n$.
The third follows by plugging in $\bm{\sigma}^k_n$, applying \eqref{eq:lipschitz}, and noting that $\E[\| \bm{z}^k_n \|^2] = 1$.
Next, we add and subtract $\bu(t_{n-1})$ inside the expectation and then apply \eqref{eq:youngs2}, with $\delta = \Delta T$, to get
\begin{align} \label{eq:sig_z}
    \E [ \| W_3 \|^2 ] \leq C_1^2 \Delta T^{2(p+1)} \LG^2 \big( (1 + \Delta T^{-1}) e^k_{n-1} +  (1 +  \Delta T) e^{k-1}_{n-1} \big). 
\end{align}
Using the new bound for $\E[\| W_3 \|^2]$ in \eqref{eq:ET1ET2ET3}, we obtain the double recurrence
\begin{align} \label{eq:5-term1}
    e^{k+1}_{n+1} \leq A e^k_n + B e^{k+1}_n + \Lambda_1 e^{k}_{n-1} + \Lambda_2 e^{k-1}_{n-1},
\end{align}
where $A = C_1^2 \Delta T^{2p+2} (2 + \Delta T^{-1})$, $B = \LG^2 (1 + 2 \Delta T)$, $\Lambda_1 = C_1^2 \Delta T^{2p+2} \LG^2 (1 + \Delta T^{-1})$, and $\Lambda_2 = C_1^2 \Delta T^{2p+2} \LG^2 (1 + \Delta T)$.
Maximising over $n$, we obtain
\begin{align} \label{eq:three_term}
    \hat{e}^{k+1} \leq \tilde{A} \hat{e}^k + \tilde{B} \hat{e}^{k-1},
\end{align}
where
\begin{align*}
    \tilde{A} = \frac{A + \Lambda_1}{1 - B} \quad \text{and} \quad \tilde{B} = \frac{\Lambda_2}{1 - B}.
\end{align*}
Recursion \eqref{eq:three_term} can be solved using \cref{lem:recursion2} (see Appendix), resulting in the desired bound.
\end{proof}

\begin{proof}[Proof for sampling rule 4]
The proof follows in the same way as the proof for sampling rule 2, except that $\E[\| \sqrt{3} (2 \bm{w}^k_n - \bm{1}) \|^2] = 1$ is used in place of $\E[\| \bm{z}^k_n \|^2] = 1$.
\end{proof}

\begin{corollary}[Linear error bound for state-dependent sampling rules 1 and 3] \label{corr:SR_13}
Suppose the SParareal scheme \eqref{eq:SP1}-\eqref{eq:SP4} satisfies \cref{assump:exactflow,assump:coarseapprox,assump:lipschitz,assump:lipschitz_exact}, with $\bXi^k_n(\bU_n^k)$ defined using sampling rule 1 or 3. 
Then, the maximal mean-square error \eqref{eq:MSE2} of the solution to the nonlinear ODE system \eqref{eq:systemODE} at iteration $k$ satisfies
\begin{align*}
    \hat{e}^k \leq \hat{e}^0 \biggr[ \frac{ A + \Lambda_1 + \Lambda_3 + \sqrt{(A + \Lambda_1 + \Lambda_3)^2 + 4 \Lambda_2 (1 - B)}}{2 (1 - B)} \biggr]^k, \quad \text{if} \quad B < 1,
\end{align*}
for $2 \leq k \leq N$ and constants $A = C_1^2 \Delta T^{2p+2} (2 + \Delta T^{-1})$, $B = \LG^2 (1 + 2 \Delta T)$, $\Lambda_1 = 2 C_1^2 \Delta T^{2p+2} \LG^2 (1 + \Delta T^{-1})$, $\Lambda_2 = 2 C_1^2 \Delta T^{2p+2} ( \LG^2 (1 + \Delta T) + 2 \LF^2 )$, and $\Lambda_3 = 4 C_1^2 \Delta T^{2p+2}$.
\end{corollary}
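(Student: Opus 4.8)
The plan is to follow the proof of \cref{corr:SR_24} almost verbatim, the one substantive change being the bound on the perturbation term $W_3 = -\bXi^k_n$, which for sampling rules 1 and 3 carries the marginal mean $\F(\bU^{k-1}_{n-1})$ rather than $\bU^k_n$. As in the proofs of \cref{thm:superlinear_bound} and \cref{corr:SR_24}, I would write $e^{k+1}_{n+1} = \E[\|W_1 + W_2 + W_3\|^2]$, apply \eqref{eq:ET1ET2ET3}, and keep the unchanged bounds $\E[\|W_1\|^2] \le C_1^2\Delta T^{2(p+1)}e^k_n$ and $\E[\|W_2\|^2] \le \LG^2 e^{k+1}_n$.

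For $W_3$, use \eqref{eq:Xi} and \eqref{eq:FG2} to get $\E[\|W_3\|^2] \le C_1^2\Delta T^{2(p+1)}\E[\|\bm{\alpha}^k_n - \bU^k_n\|^2]$, and substitute $\bm{\alpha}^k_n$ from \cref{table:1}: for rule 1, $\bm{\alpha}^k_n - \bU^k_n = (\F(\bU^{k-1}_{n-1}) - \bU^k_n) + \bm{\sigma}^k_n\circ\bm{z}^k_n$ (with $\sqrt{3}(2\bm{w}^k_n-\bm{1})$ in place of $\bm{z}^k_n$ for rule 3). Split this with Young's inequality \eqref{eq:youngs2} into a ``noise'' part $\|\bm{\sigma}^k_n\circ\bm{z}^k_n\|^2$, treated exactly as in \cref{corr:SR_24} --- Cauchy--Schwarz and independence, then \eqref{eq:lipschitz} and $\E[\|\bm{z}^k_n\|^2]=1$, then adding and subtracting $\bu(t_{n-1})$ and applying \eqref{eq:youngs2} --- which yields terms in $e^k_{n-1}$ and $e^{k-1}_{n-1}$; and a ``state'' part $\|\F(\bU^{k-1}_{n-1}) - \bU^k_n\|^2$. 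The state part is the genuinely new step: add and subtract $\bu(t_n)$, use \cref{assump:exactflow} to write $\bu(t_n) = \F(\bu(t_{n-1}))$, then Young's inequality and the Lipschitz bound \eqref{eq:lipschitz_exact} on $\F$ --- this is exactly where \cref{assump:lipschitz_exact}, absent from \cref{corr:SR_24}, enters --- to bound $\E[\|\F(\bU^{k-1}_{n-1}) - \bU^k_n\|^2]$ by a multiple of $\LF^2 e^{k-1}_{n-1}$ plus a multiple of $e^k_n$.

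Collecting these contributions into \eqref{eq:ET1ET2ET3} with the same $\delta_1 = \Delta T$, $\delta_2 = 1$, $\delta_3 = \Delta T^{-1}$ (and suitable choices for the extra Young parameters introduced above) produces a four-term recursion
\begin{align*}
    e^{k+1}_{n+1} \le (A + \Lambda_3)\, e^k_n + B\, e^{k+1}_n + \Lambda_1\, e^k_{n-1} + \Lambda_2\, e^{k-1}_{n-1},
\end{align*}
with $A$, $B$ as before, $\Lambda_1$ and $\Lambda_2$ of the same shape as in \cref{corr:SR_24} but enlarged by the noise/state splitting and by the new $\LF^2$ ingredient in $\Lambda_2$, and $\Lambda_3$ the extra $e^k_n$-weight generated by the $\bu(t_n) - \bU^k_n$ piece of the state part. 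Maximising over $n$, valid once $B < 1$, collapses this to $\hat{e}^{k+1} \le \tilde{A}\hat{e}^k + \tilde{B}\hat{e}^{k-1}$ with $\tilde{A} = (A + \Lambda_1 + \Lambda_3)/(1-B)$ and $\tilde{B} = \Lambda_2/(1-B)$ --- the form of \eqref{eq:three_term}. Solving this two-term linear recurrence with \cref{lem:recursion2}, whose dominant growth rate is the larger root $\tfrac12(\tilde{A} + \sqrt{\tilde{A}^2 + 4\tilde{B}})$ of $x^2 = \tilde{A}x + \tilde{B}$, and bounding the initial data $\hat{e}^1$ back to $\hat{e}^0$ via \eqref{eq:remark1}, yields the stated inequality. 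Rule 3 is identical after replacing $\E[\|\bm{z}^k_n\|^2]=1$ with $\E[\|\sqrt{3}(2\bm{w}^k_n - \bm{1})\|^2]=1$.

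The main obstacle is bookkeeping rather than analysis: one must pick all the Young's-inequality parameters --- the three in \eqref{eq:ET1ET2ET3}, the one splitting $\bm{\alpha}^k_n - \bU^k_n$ into its state and noise pieces, and the ones used inside each of those --- so that the constants assemble into exactly $A$, $B$, $\Lambda_1$, $\Lambda_2$, $\Lambda_3$ as stated, and check that the shifted errors $e^k_{n-1}$, $e^{k-1}_{n-1}$ are well defined (they vanish at $n=0$ and at $k=0$) so that the recursion and its maximisation over $n$ are legitimate for $2 \le k \le N$. No analytic difficulty beyond that of \cref{corr:SR_24} appears; the only conceptual addition is the use of \cref{assump:lipschitz_exact} to absorb $\F(\bU^{k-1}_{n-1}) - \F(\bu(t_{n-1}))$ in the state term.
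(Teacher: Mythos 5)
Your proposal is correct and follows essentially the same route as the paper: the same decomposition of $W_3$ via \eqref{eq:Xi} and \eqref{eq:FG2}, the same Peter--Paul split of $\bm{\alpha}^k_n - \bU^k_n$ into the noise piece (handled as in \cref{corr:SR_24}) and the state piece $\F(\bU^{k-1}_{n-1}) - \bU^k_n$ (handled by inserting $\F(\bu(t_{n-1})) = \bu(t_n)$ and invoking \cref{assump:lipschitz_exact}), leading to the identical four-term recursion, maximisation over $n$, and application of \cref{lem:recursion2}. The only cosmetic difference is your remark about passing from $\hat{e}^1$ to $\hat{e}^0$ via \eqref{eq:remark1} --- the reduction to $\hat{e}^0$ is already built into \cref{lem:recursion2} --- but this does not affect the argument.
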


\begin{proof}[Proof for sampling rule 1]
The proof follows in the same fashion as \cref{corr:SR_24}. 
Substituting $\bm{\alpha}^k_n$ for sampling rule 1 (\cref{table:1}) in \eqref{eq:E_alp}, we get
\begin{align}
    \E [ \| W_3 \|^2 ] &\leq C_1^2 \Delta T^{2(p+1)} \E [\| \F(\bU^{k-1}_{n-1}) - \bU^k_{n} + \bm{\sigma}^k_n \circ \bm{z}^k_n \|^2 ] \nonumber \\
    &\leq 2 C_1^2 \Delta T^{2(p+1)} \big( \underbrace{\E [\| \F(\bU^{k-1}_{n-1}) - \bU^k_{n} \|^2]}_{\text{1st Term}} + \underbrace{\E [\| \bm{\sigma}^k_n \circ \bm{z}^k_n \|^2 ]}_{\text{2nd Term}} \big) \label{eq:SR1_full}.
\end{align}
The second inequality follows by applying \eqref{eq:youngs2} with $\delta = 1$.
To bound the first term in \eqref{eq:SR1_full}, we add and subtract $\F(\bu(t_{n-1}))$ inside the expectation and apply \eqref{eq:youngs2} again with $\delta = 1$, obtaining
\begin{align*}
    \text{1st Term} &\leq 2 \big( \E [\| \F(\bU^{k-1}_{n-1}) - \F(\bu(t_{n-1})) \|^2] + \E [\| \F(\bu(t_{n-1})) - \bU^k_{n} \|^2 ] \big) \\
    &\leq 2 \big( \LF^2 e^{k-1}_{n-1} + e^k_n \big).
\end{align*}
The second inequality follows by applying the Lipschitz condition \eqref{eq:lipschitz_exact} and recalling that $\F$ is the exact solver \eqref{eq:exact}.
The second term in \eqref{eq:SR1_full} can be bounded as in \eqref{eq:sig_z} in \cref{corr:SR_24},
\begin{align*}
    \text{2nd Term} \leq \LG^2 \big( (1 + \Delta T^{-1}) e^k_{n-1} +  (1 +  \Delta T) e^{k-1}_{n-1} \big).
\end{align*}
Combining both terms in \eqref{eq:SR1_full}, we obtain 
\begin{align*}
    \E [ \| W_3 \|^2 ] &\leq \Lambda_1 e^k_{n-1} + \Lambda_2 e^{k-1}_{n-1} + \Lambda_3 e^k_n,
\end{align*}
where $\Lambda_1 = 2 C_1^2 \Delta T^{2p+2} \LG^2 (1 + \Delta T^{-1})$, $\Lambda_2 = 2 C_1^2 \Delta T^{2p+2} ( \LG^2 (1 + \Delta T) + 2 \LF^2 )$, and $\Lambda_3 = 4 C_1^2 \Delta T^{2p+2}$.
Using the new bound for $\E[\| W_3 \|^2]$ in \eqref{eq:ET1ET2ET3}, we obtain the following recurrence
\begin{align} \label{eq:5-term2}
    e^{k+1}_{n+1} \leq (A + \Lambda_3) e^k_n + B e^{k+1}_n + \Lambda_1 e^{k}_{n-1} + \Lambda_2 e^{k-1}_{n-1},
\end{align}
where $A = C_1^2 \Delta T^{2p+2} (2 + \Delta T^{-1})$ and $B = \LG^2 (1 + 2 \Delta T)$.
Maximising over $n$, we obtain
\begin{align} \label{eq:three_term2}
    \hat{e}^{k+1} \leq \tilde{A} \hat{e}^k + \tilde{B} \hat{e}^{k-1},
\end{align}
where
\begin{align*}
    \tilde{A} = \frac{A + \Lambda_1 + \Lambda_3}{1 - B} \quad \text{and} \quad \tilde{B} = \frac{\Lambda_2}{1 - B}.
\end{align*}
Recursion \eqref{eq:three_term2} can be solved using \cref{lem:recursion2} (see Appendix), resulting in the desired bound.
\end{proof}

\begin{proof}[Proof for sampling rule 3]
The proof follows in a similar fashion to the proof for sampling rule 1, with $\E[\| \sqrt{3} (2 \bm{w}^k_n - \bm{1}) \|^2] = 1$ being used in place of $\E[\| \bm{z}^k_n \|^2] = 1$.
\end{proof}

\begin{remark} \label{rem:num_bound}
In \cref{sec:numerics}, we can observe the behaviour of $e^k_n$ (not just $\hat{e}^k$) for each of the sampling rules by solving the recursions \eqref{eq:5-term1} and \eqref{eq:5-term2} numerically.
We do this by replacing the inequality with an equality, i.e.\ upper bounding the error estimate.
\end{remark}

\section{Numerical Experiments} \label{sec:numerics}
Here, we present some experiments to compare the theoretical bounds derived in \cref{sec:convergence} with the errors generated by running SParareal numerically.
MATLAB code to reproduce the results below can be found at \url{https://github.com/kpentland/StochasticPararealAnalysis}.

\subsection{System of linear ODEs} \label{sec:linear_ODEs}
\begin{figure}[b!]
    \centering
    % \textbf{$B < 1$ case}\par\medskip
    \begin{subfigure}{0.36\linewidth}
        \includegraphics[width=\textwidth]{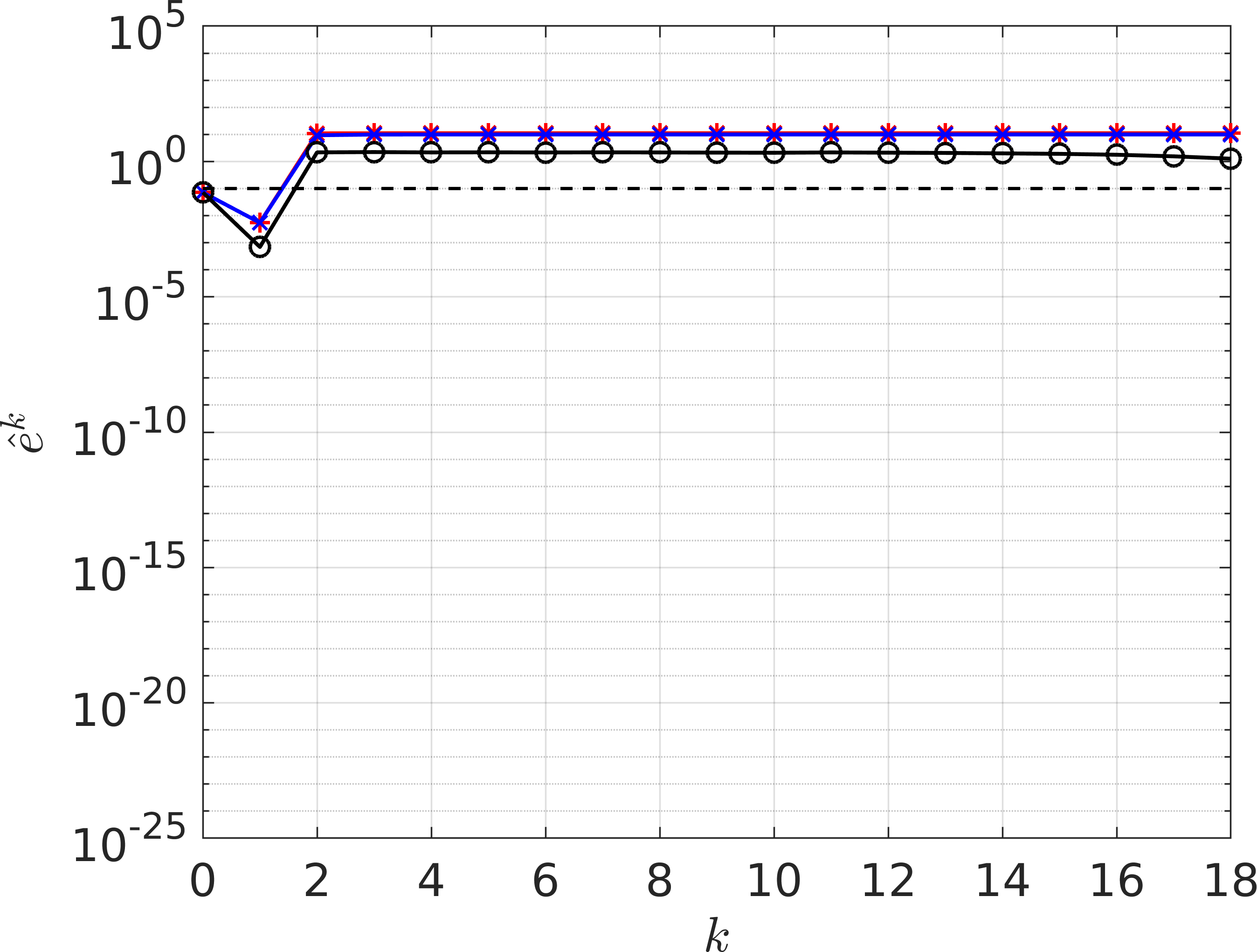}
        \caption{$q=0$}
    \end{subfigure}
    \begin{subfigure}{0.31\linewidth}
        \includegraphics[width=\textwidth]{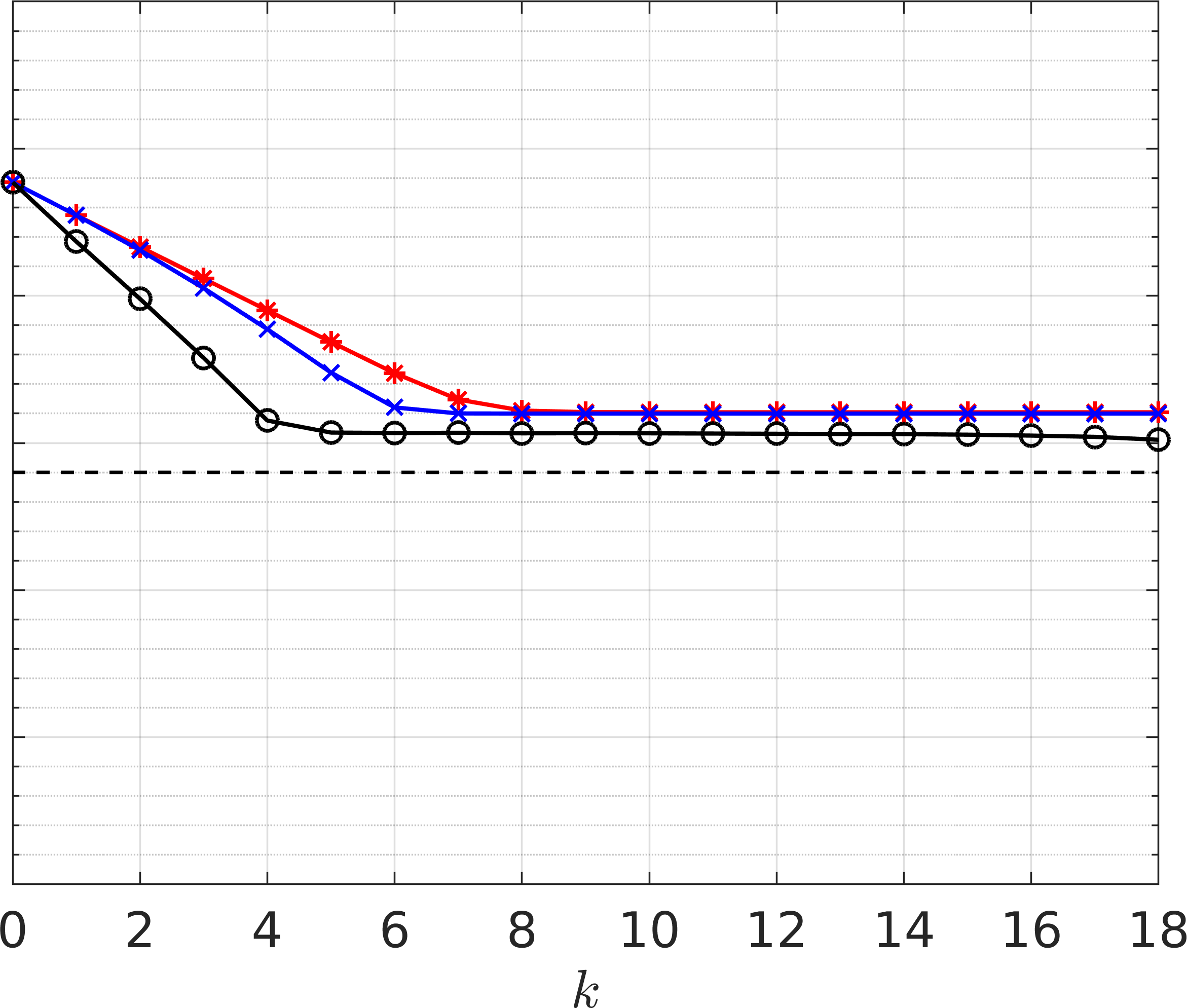}
        \caption{$q=5$}
    \end{subfigure}
    \begin{subfigure}{0.31\linewidth}
        \includegraphics[width=\textwidth]{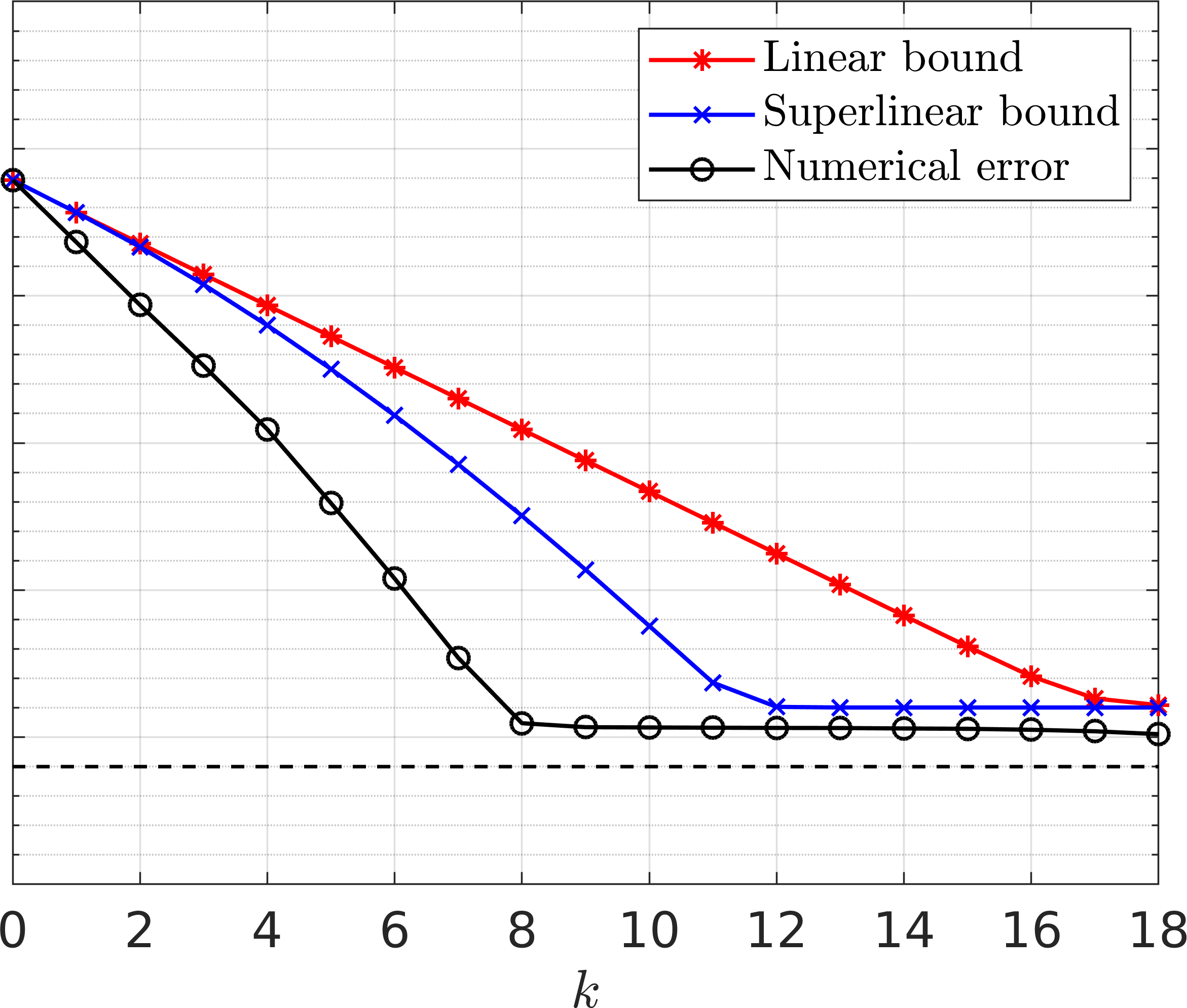}
        \caption{$q=10$}
    \end{subfigure}
    \caption{Theoretical bounds vs. numerical errors for SParareal applied to the linear system of ODEs \eqref{eq:linear_system_ODE} (with $B<1$) using state-independent Gaussian perturbations \eqref{eq:Gaussians}.
    The superlinear bound (\cref{thm:superlinear_bound}) is given in blue, the linear bound (\cref{corr:linear_bound}) in red, the numerical error in black, and $\Delta T^{2q+1}$ in dashed black. 
    Each plot corresponds to a different level of Gaussian noise: (a) $q=0$, (b) $q=5$, and (c) $q=10$.
    Numerical errors were calculated by averaging over $500$ independent realisations of SParareal.}
    \label{fig:linear_conv_case1}
\end{figure}
\begin{figure}[t!]
    \centering
    \begin{subfigure}{0.35\linewidth}
        \includegraphics[width=\textwidth]{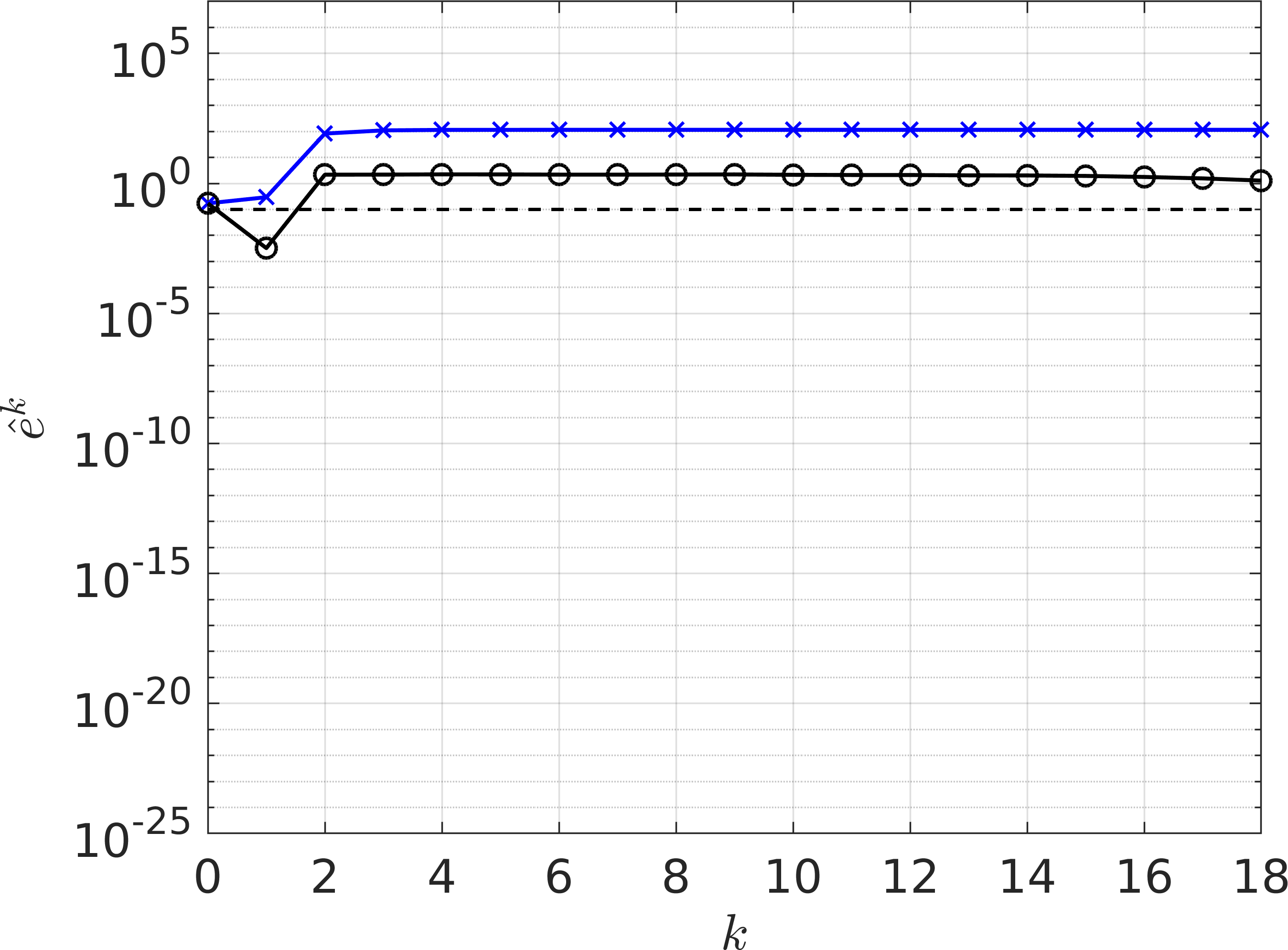}
        \caption{$q=0$}
    \end{subfigure}
    \begin{subfigure}{0.31\linewidth}
        \includegraphics[width=\textwidth]{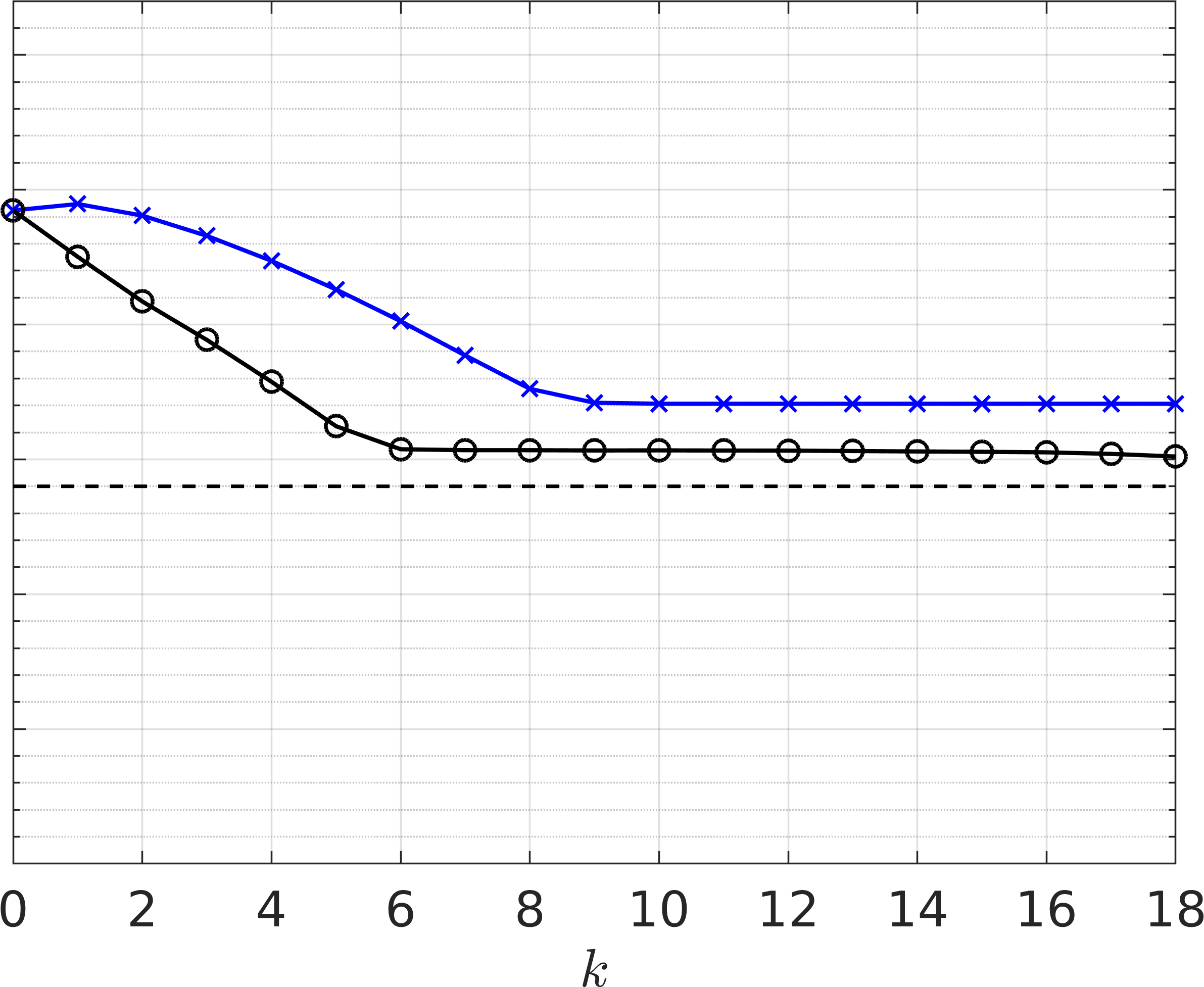}
        \caption{$q=5$}
    \end{subfigure}
    \begin{subfigure}{0.31\linewidth}
        \includegraphics[width=\textwidth]{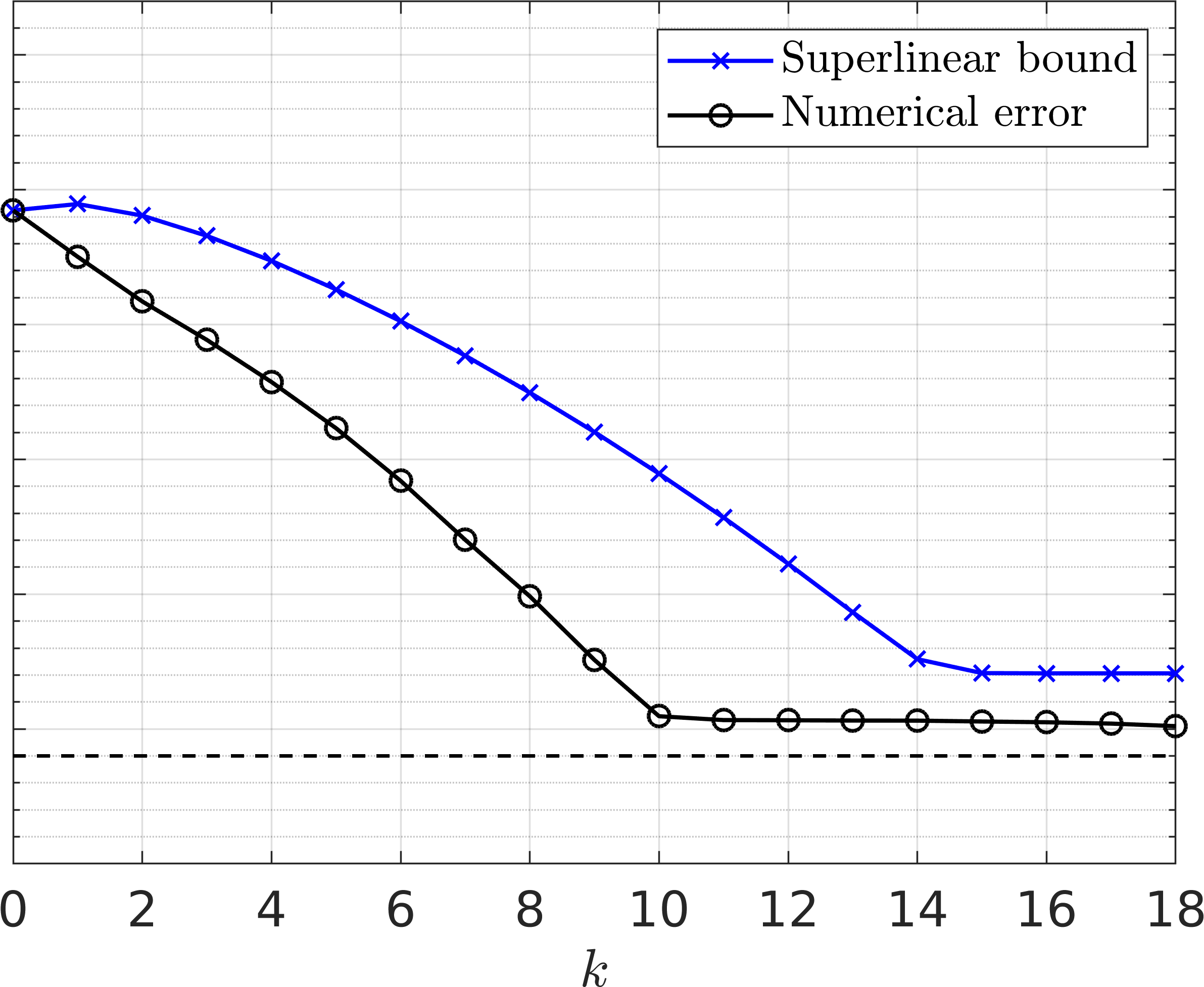}
        \caption{$q=10$}
    \end{subfigure}
    \caption{Theoretical bounds vs. numerical errors for SParareal applied to the linear system of ODEs \eqref{eq:linear_system_ODE} (with $B\geq1$) using state-independent Gaussian perturbations \eqref{eq:Gaussians}.
    The superlinear bound (\cref{thm:superlinear_bound}) is given in blue, the numerical error in black, and $\Delta T^{2q+1}$ in dashed black. 
    Each plot corresponds to a different level of Gaussian noise: (a) $q=0$, (b) $q=5$, and (c) $q=10$.
    Numerical errors were calculated by averaging over $500$ independent realisations of SParareal.}
    \label{fig:linear_conv_case2}
\end{figure}
\begin{figure}[b!]
    \centering
    \includegraphics[width=0.49\linewidth]{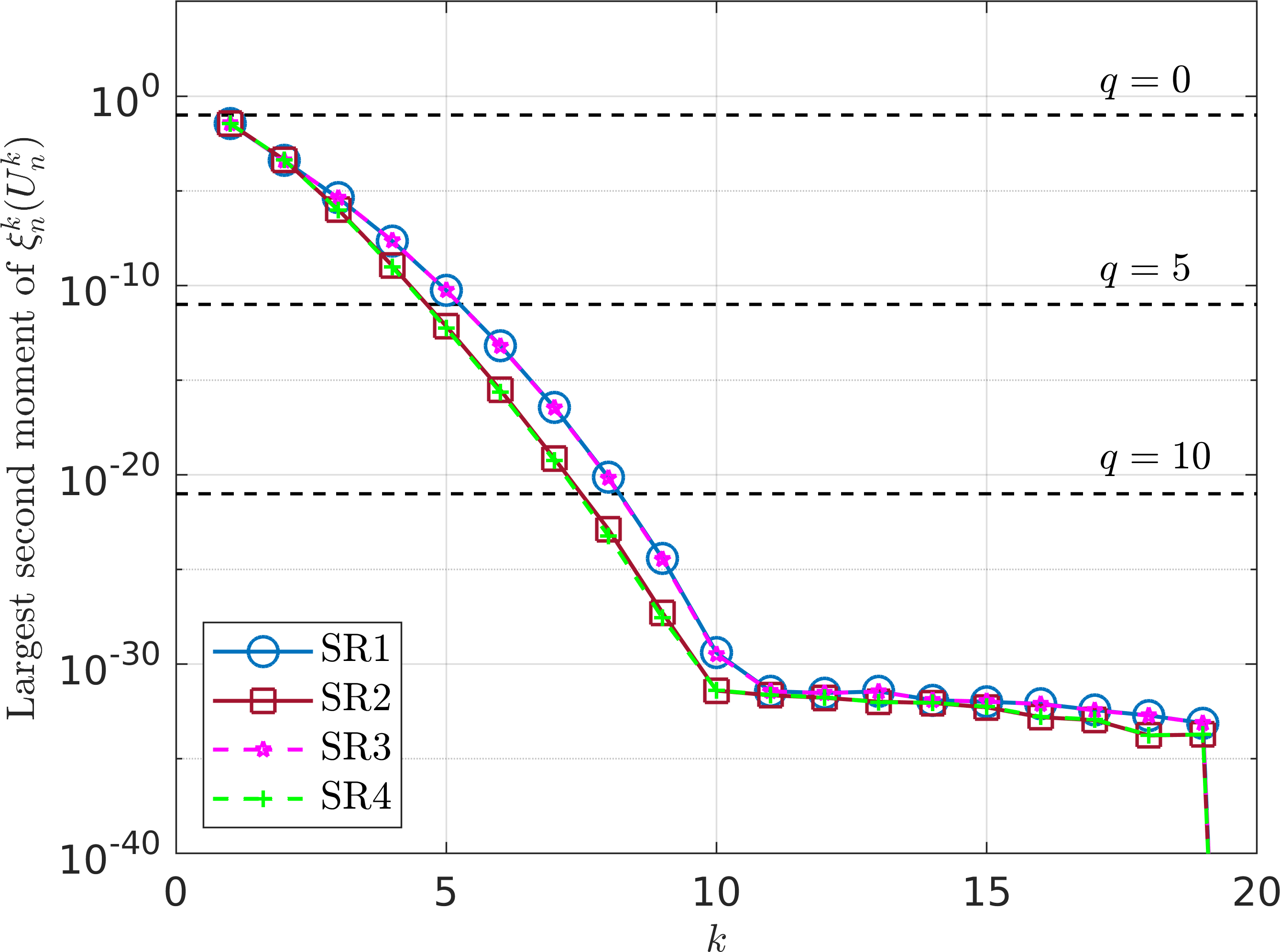}
    \caption{Largest second moments (over $n$) of $\bXi^k_n(\bU^k_n)$ fo the sampling rules $1$ to $4$ (light blue, brown, purple, and green respectively) and the Gaussian perturbations \eqref{eq:Gaussians} for $q \in \{0,5,10 \}$ (dashed black), plotted against iteration number $k$.
    Second moments for the sampling rules were calculated by averaging over $500$ independent realisations of SParareal.}
    \label{fig:second_moments}
\end{figure}
\begin{figure}[t!]
    \centering
    \begin{subfigure}{0.48\linewidth}
        \includegraphics[width=\textwidth]{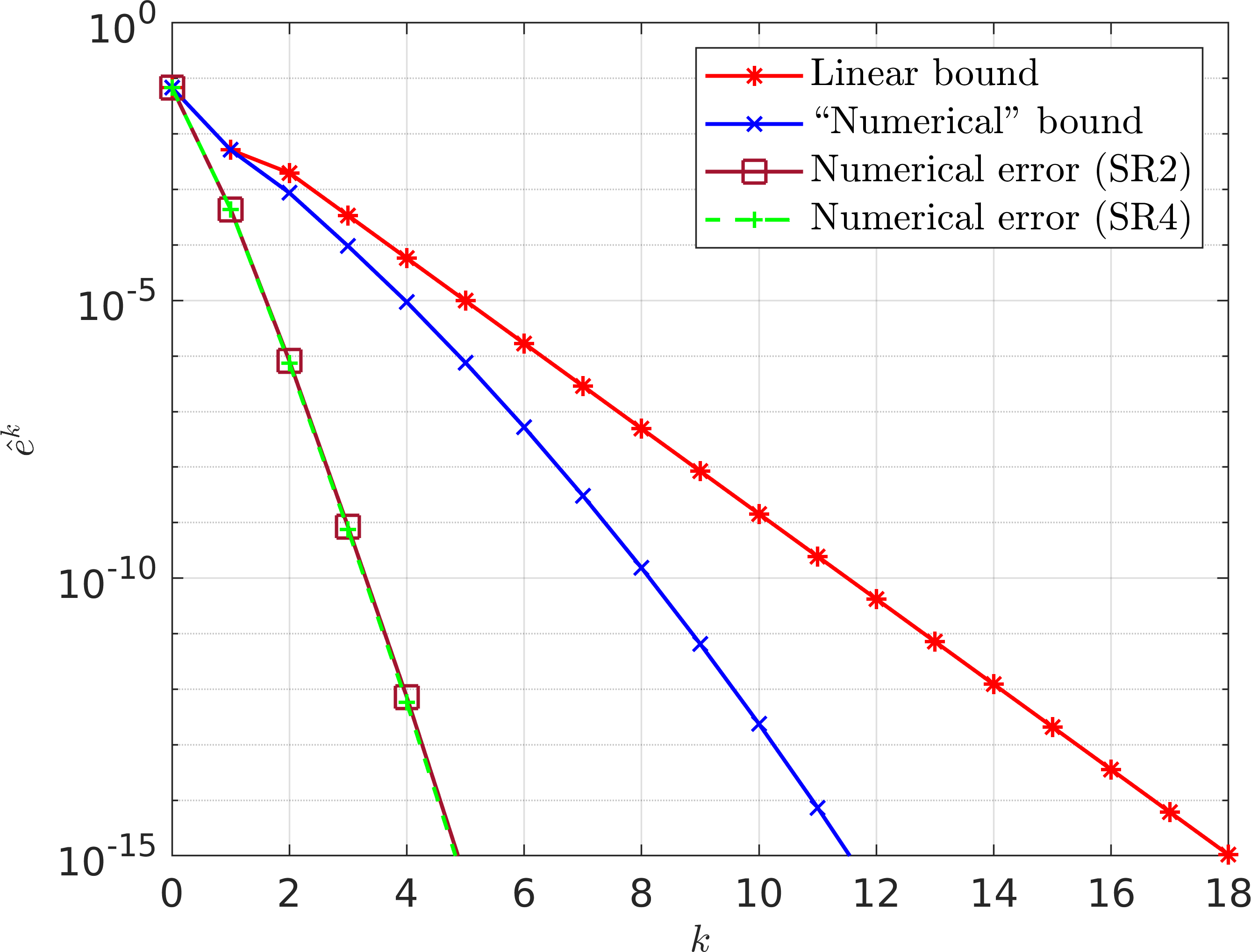}
        \caption{Sampling rules 2 and 4}
    \end{subfigure}
    \begin{subfigure}{0.49\linewidth}
        \includegraphics[width=\textwidth]{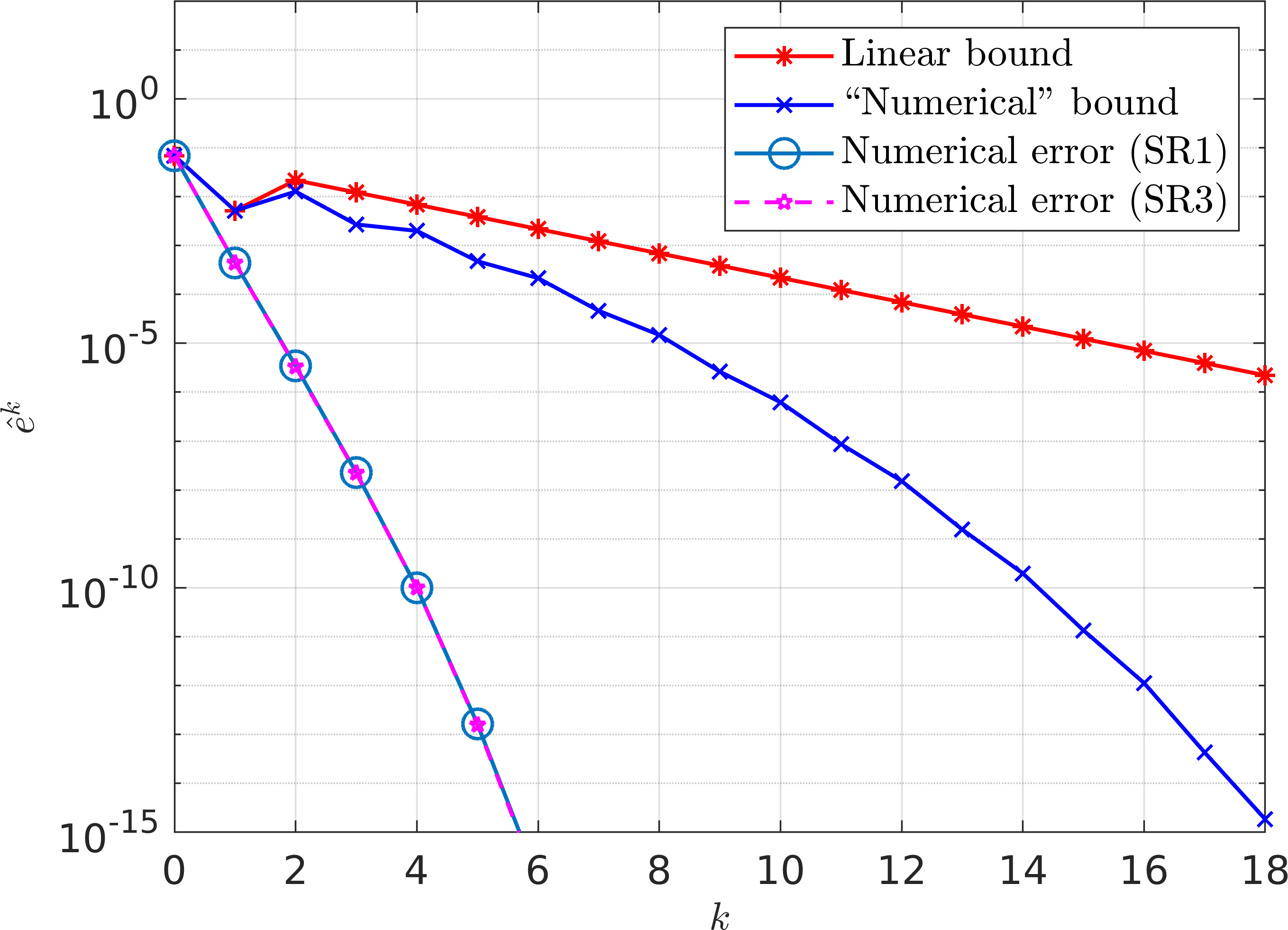}
        \caption{Sampling rules 1 and 3}
    \end{subfigure}
    \caption{Theoretical bounds vs. numerical errors for SParareal applied to the linear system of ODEs \eqref{eq:linear_system_ODE} (with $B<1$) using the state-dependent sampling rules.
    (a) The linear bound in \cref{corr:SR_24} is shown in red, the numerically solved recursion \eqref{eq:5-term1} in blue, and the numerical errors for sampling rules 2 and 4 in brown and green, respectively. 
    (b) The linear bound in \cref{corr:SR_13} is shown in red, the numerically solved recursion \eqref{eq:5-term2} in blue, and the numerical errors for sampling rules 1 and 3 in light blue and purple, respectively.
    Numerical errors were calculated by averaging over $500$ independent realisations of SParareal.}
    \label{fig:linear_conv_SR}
\end{figure}
\begin{figure}[b!]
    \centering
    \includegraphics[width=0.49\linewidth]{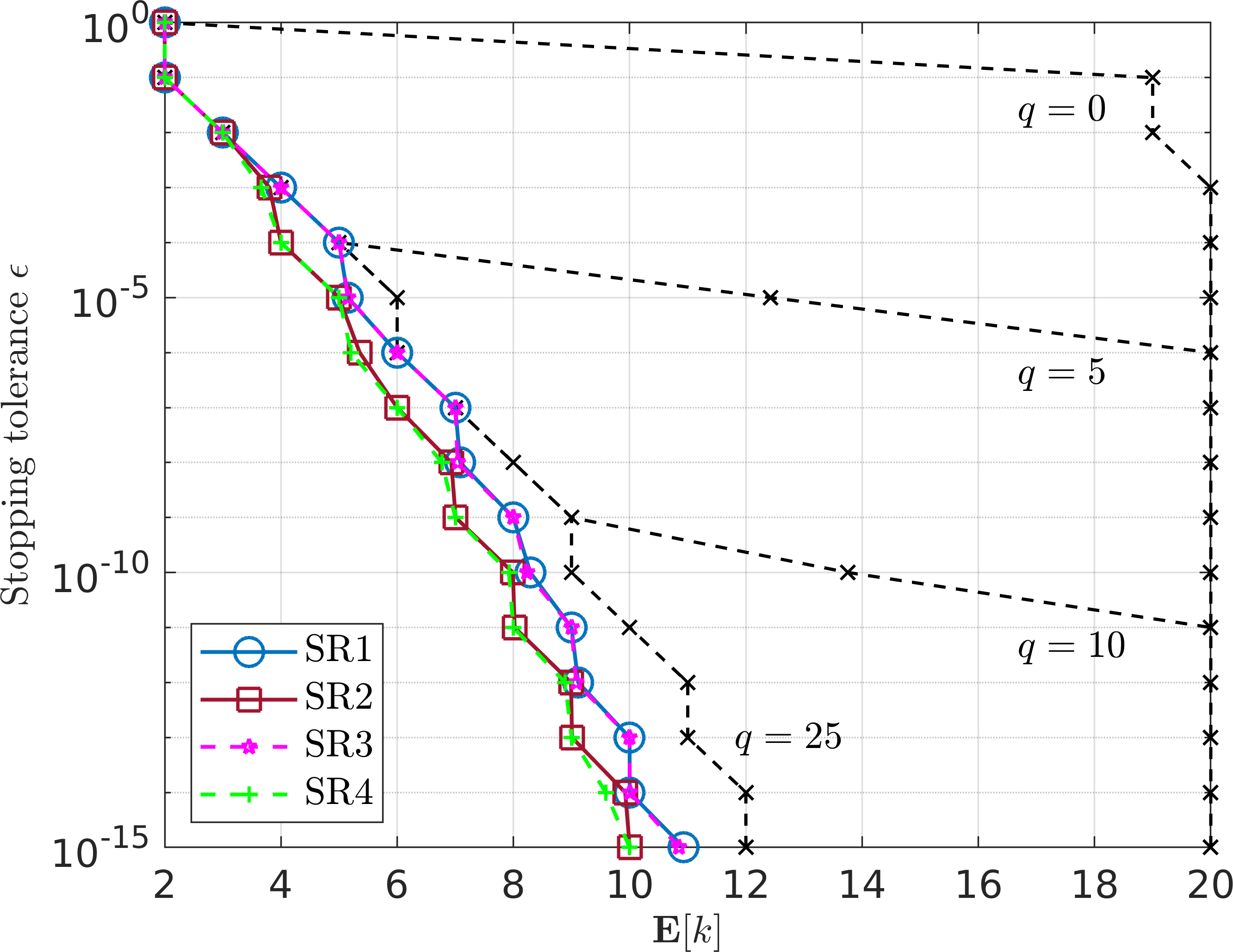}
    \caption{Expected number of iterations $k$ taken to reach stopping tolerance $\varepsilon$ \eqref{eq:tolerance} for SParareal applied to the linear system \eqref{eq:linear_system_ODE} (with $B<1$).
    Results plotted using SParareal with each sampling rule (see legend) and the Gaussian perturbations \eqref{eq:Gaussians} for $q \in \{0,5,10,25 \}$ (dashed black lines).
    $\mathbb{E}[k]$ calculated by averaging $k$ over $500$ independent realisations of SParareal.}
    \label{fig:epsilon_vs_k}
\end{figure}
In the following experiments, we solve the linear system of ODEs
\begin{equation} \label{eq:linear_system_ODE} 
    \frac{\rd \bm{u}}{\rd t} = Q \bm{u} \quad \text{over} \quad t \in [0,T], \quad \text{with} \quad \bm{u}(0) = \bm{u}_0,
\end{equation}
where $Q \in \Reals^{d \times d}$.
This system has the exact solution $\bm{u}(t) = \bm{u}_0 e^{Qt}$, where $e^{Qt} = \sum_{i=0}^{\infty} (Qt)^i /i!$ is the matrix exponential.

First, we examine the superlinear and linear bounds derived in \cref{thm:superlinear_bound} and \cref{corr:linear_bound}, respectively, by running SParareal numerically with state-independent Gaussian perturbations
\begin{align} \label{eq:Gaussians}
    \bXi^k_n \sim \mathcal{N}(\bm{0},\Delta T^{2q+1} I_d), \quad q \geq 0.
\end{align}
We solve \eqref{eq:linear_system_ODE} with $d = 100$ and $T=2$, discretising the time interval into $N = 20$ time slices so that $\Delta T = 0.25$.
We construct the matrix of coefficients $Q$ such that $B<1$ and also select initial conditions $\bm{u}_0 \in [-5,5]^d$.
We use the exact solver $\F(\bu) = \bu e^{Q \Delta T}$ and the forward Euler method $\G(\bu) = (I_d + Q \Delta T) \bu$.
In \cref{fig:linear_conv_case1}, we plot the maximal theoretical bounds $\hat e^k$ and numerical errors of SParareal as a function of $k$ for different values of $q$ when $B<1$. 
These results illustrate how the errors decrease as $k$ increases (except when $q=0$), up until the error induced by the perturbations become dominant---exactly the effect described in \cref{rem:loose_bound}.
For all considered values of $q$, the error for $k\geq 2$ has a hard lower bound of $\mathcal{O}(\Delta T^{2q+1})$, i.e.\ the error cannot go below the second moments of the perturbations (indicated by the dashed black line in each case).
By altering $Q$ and running the same experiment, we see similar effects in the $B \geq 1$ case, see \cref{fig:linear_conv_case2}.

It can be seen that, regardless of $B$, using the state-independent perturbations may not be optimal because of the lower bound forced upon the errors.
If they are to be used, then they would need to be chosen such that the second moments are smaller than the accuracy of the solutions sought.
This approach, however, may not yield accelerated convergence over the classic parareal scheme.
To avoid this (and the lower bound on accuracy), the perturbations need to be state-dependent and therefore able to adapt, i.e. the second moments need to decrease with $k$ and scale with $n$.
In \cref{fig:second_moments}, we illustrate how the second moments of the perturbations used in the state-dependent sampling rules decrease with $k$ throughout the SParareal simulation, comparing these to the fixed second moments of the Gaussians \eqref{eq:Gaussians} for each $q \in \{0,5,10 \}$ (dashed lines).
Using the sampling rules enables SParareal to sample from probability distributions that begin to ``contract'' around the exact solution states as the simulation progresses, i.e.\ as $k$ increases. 
This results in high solution accuracy in very few iterations, as will be shown in \cref{fig:linear_conv_SR}.

It should be noted that we could have also chosen a different distribution other than the Gaussian from which to sample each state-independent $\bXi^k_n$, as long as \cref{assump:boundedRV} is satisfied.
For example, choosing uniformly distributed perturbations $\bXi^k_n \sim \mathcal{U}[-\sqrt{3}\Delta T^{q+\frac{1}{2}},\sqrt{3}\Delta T^{q+\frac{1}{2}}]^d $ yielded almost identical results (not shown).

Next, we plot the linear bounds for perturbations defined by the sampling rules, i.e. \cref{corr:SR_24} and \cref{corr:SR_13}, against the corresponding numerical errors in \cref{fig:linear_conv_SR} (for the $B < 1$ problem).
We observe that the linear bounds are not that tight due to the maximisation over $n$ required to derive them.
However, by solving recursions \eqref{eq:5-term1} and \eqref{eq:5-term2} numerically (recall \cref{rem:num_bound}), we observe a tighter bound on the error. 
All that is required to calculate these ``numerical'' bounds are the errors at the `zeroth' iteration (obtained from SParareal itself by just running $\mathcal{G}$), errors at the first iteration (recall \eqref{eq:remark1}) and the constants $C_1$ and $\LG$.
Note that the numerical errors for sampling rules 2/4 and 1/3 overlap because the perturbations used in each scheme have almost identical second moments (recall \cref{fig:second_moments}). 

In \cref{fig:epsilon_vs_k}, we compare the performance of the state-independent and -dependent perturbations by plotting the expected number of iterations $\mathbb{E}[k]$ taken to reach a pre-defined stopping tolerance $\varepsilon$, recall \eqref{eq:tolerance}.
We observe that, on average, the sampling rules reach tolerance in fewer iterations than the state-independent perturbations.
The sampling rules also outperform classic parareal, which can be observed by comparing them to the state-independent perturbations for $q=25$, for which perturbations are so small that SParareal is practically deterministic (i.e.\ parareal).
Recall that reducing $k$ by even a few iterations can significantly increase parallel speedup.
% We can confirm this observation by running parareal for each $\varepsilon$ and noting how it matches the $q=25$ dashed line perfectly, however the result is not shown for clarity.
The other advantage of using SParareal is that it returns a distribution of solution trajectories upon multiple realisations, instead of a single solution trajectory as parareal does, which can be interpreted as some form of uncertainty quantification over the solution to the IVP.

\subsection{Scalar nonlinear ODE} \label{sec:nonlinear_ODEs}
\begin{figure}[b!]
    \centering
    \begin{subfigure}{0.35\linewidth}
        \includegraphics[width=\textwidth]{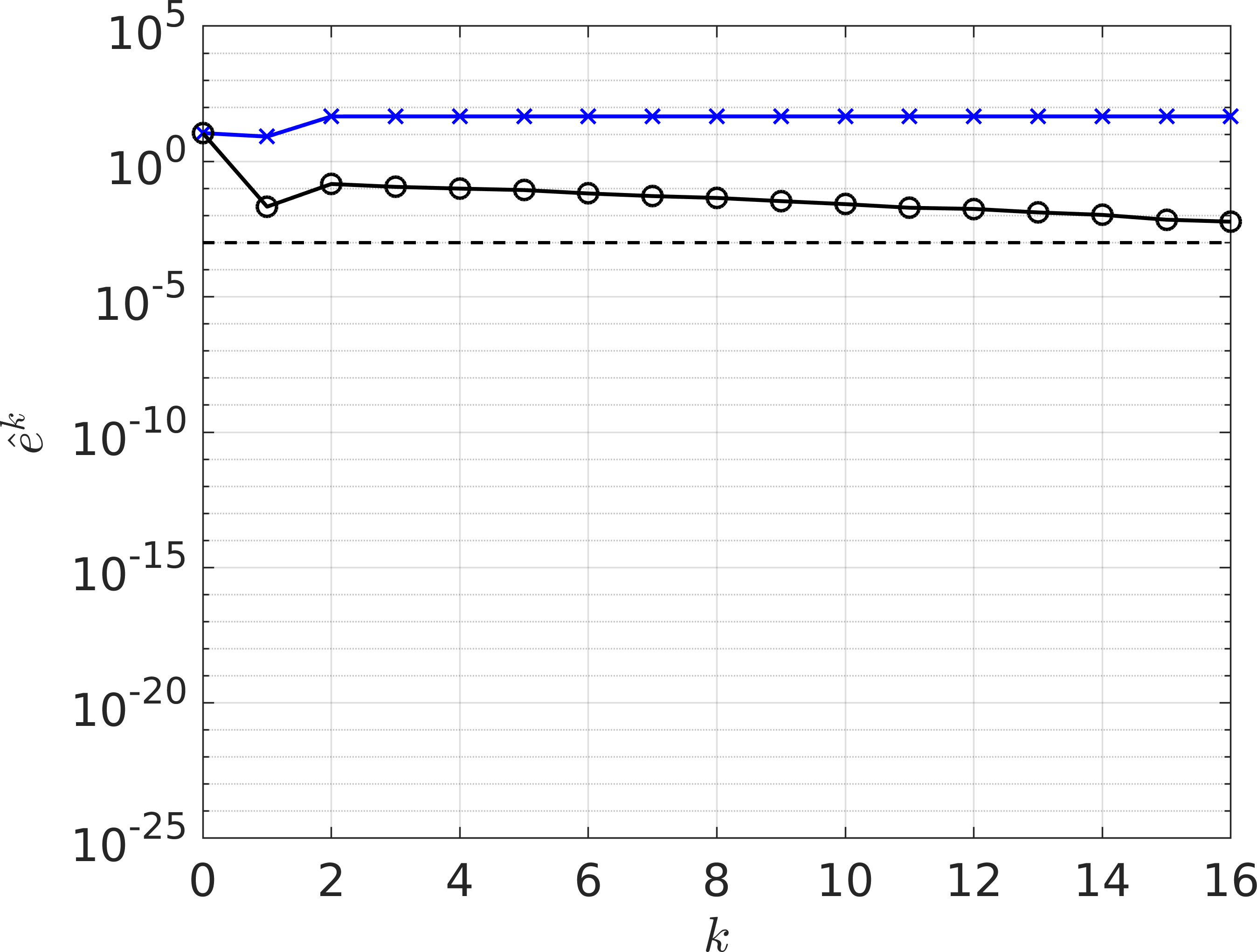}
        \caption{$q=1$}
    \end{subfigure}
    \begin{subfigure}{0.30\linewidth}
        \includegraphics[width=\textwidth]{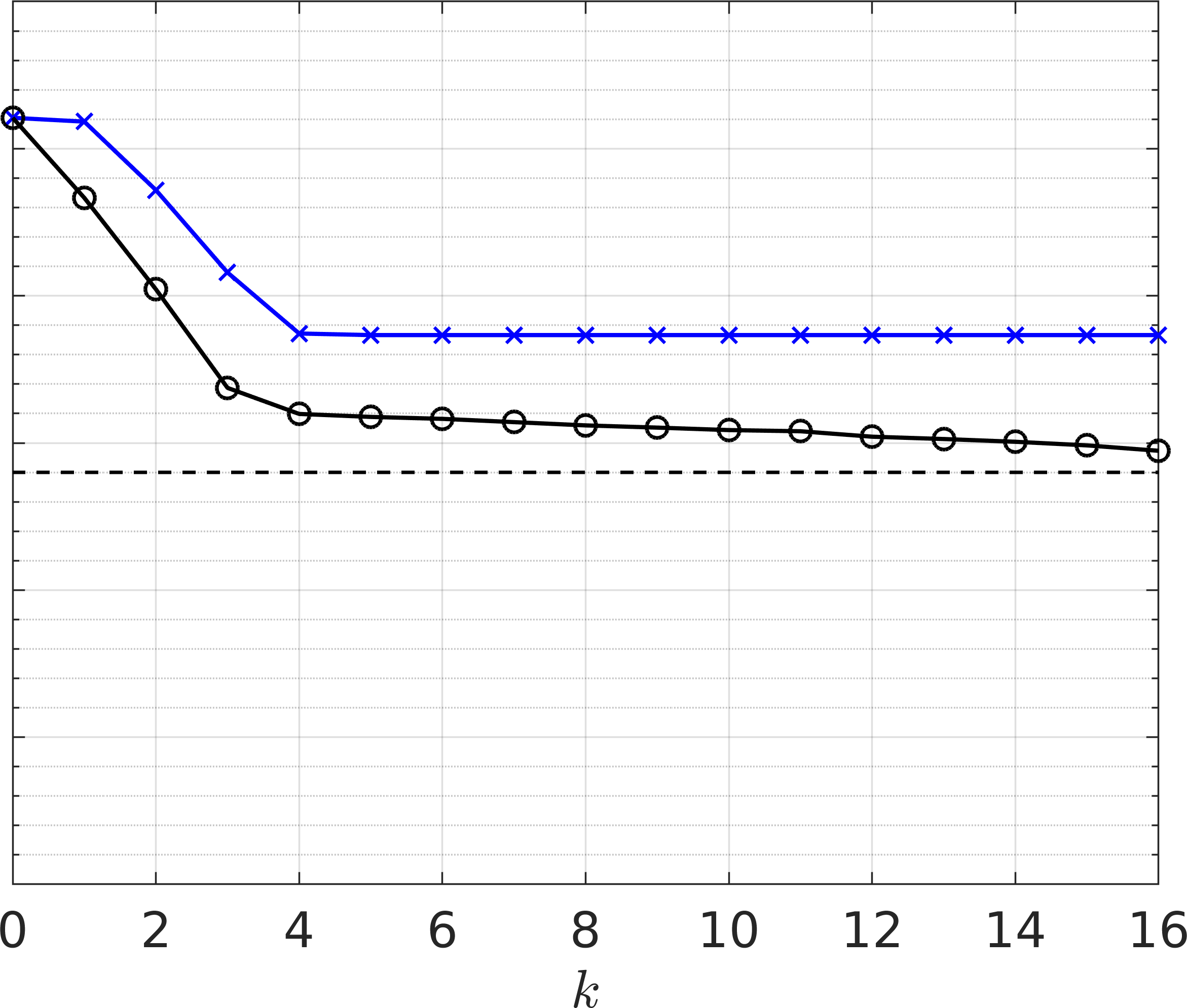}
        \caption{$q=5$}
    \end{subfigure}
    \begin{subfigure}{0.30\linewidth}
        \includegraphics[width=\textwidth]{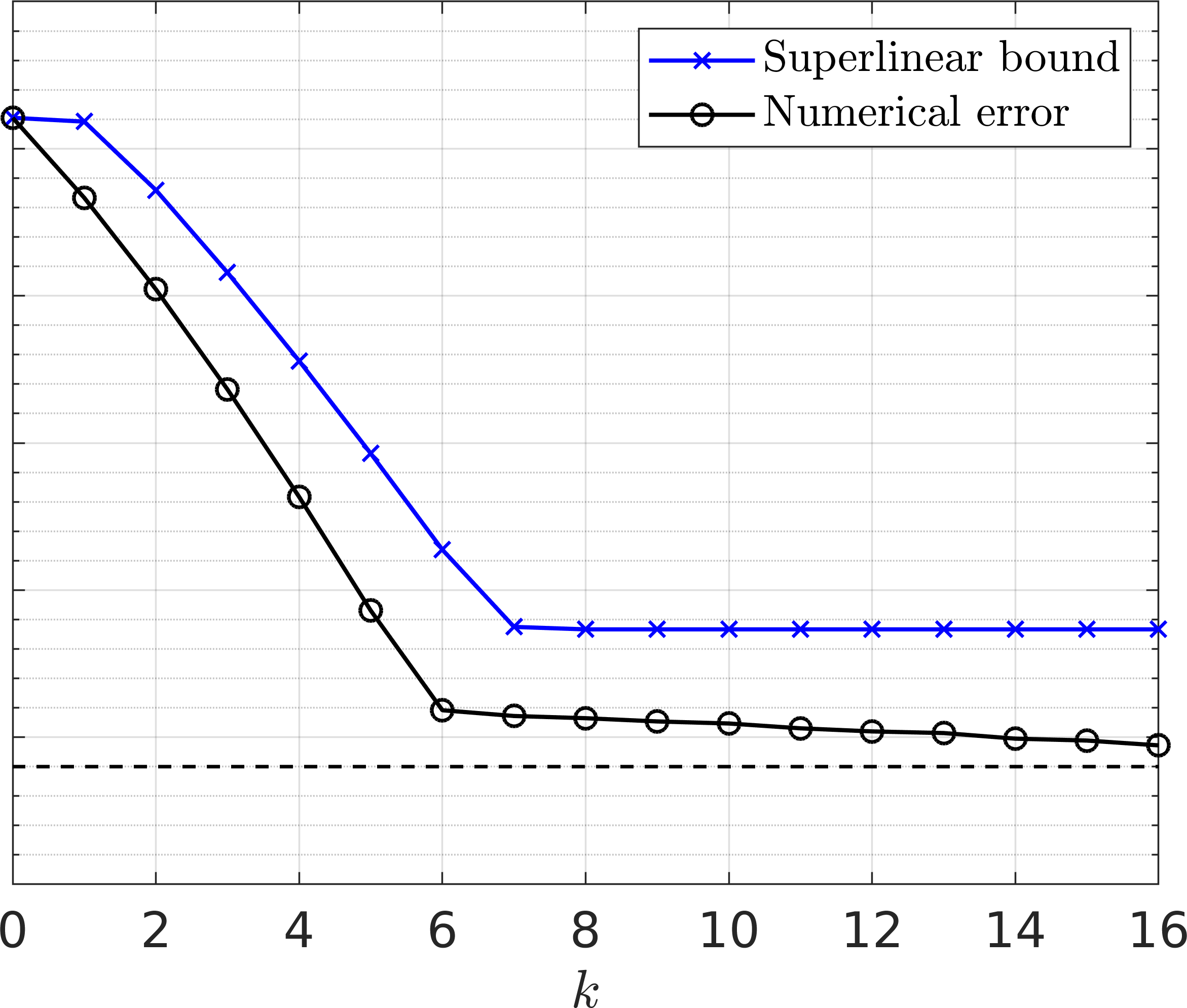}
        \caption{$q=10$}
    \end{subfigure}
    \caption{Theoretical bounds vs. numerical errors for SParareal applied to the nonlinear scalar ODE \eqref{eq:scalar_nonlinearODE} (with $B\geq1$) using state-independent Gaussian perturbations \eqref{eq:Gaussians}.
    The superlinear bound (\cref{thm:superlinear_bound}) is given in blue, the numerical error in black, and $\Delta T^{2q+1}$ in dashed black. 
    Each plot corresponds to a different level of Gaussian noise: (a) $q=1$, (b) $q=5$, and (c) $q=10$.
    Numerical errors were calculated by averaging over $500$ independent realisations of SParareal.}
    \label{fig:scalar_nonlinear}
\end{figure}
\begin{figure}[t!]
    \centering
    \begin{subfigure}{0.48\linewidth}
        \includegraphics[width=\textwidth]{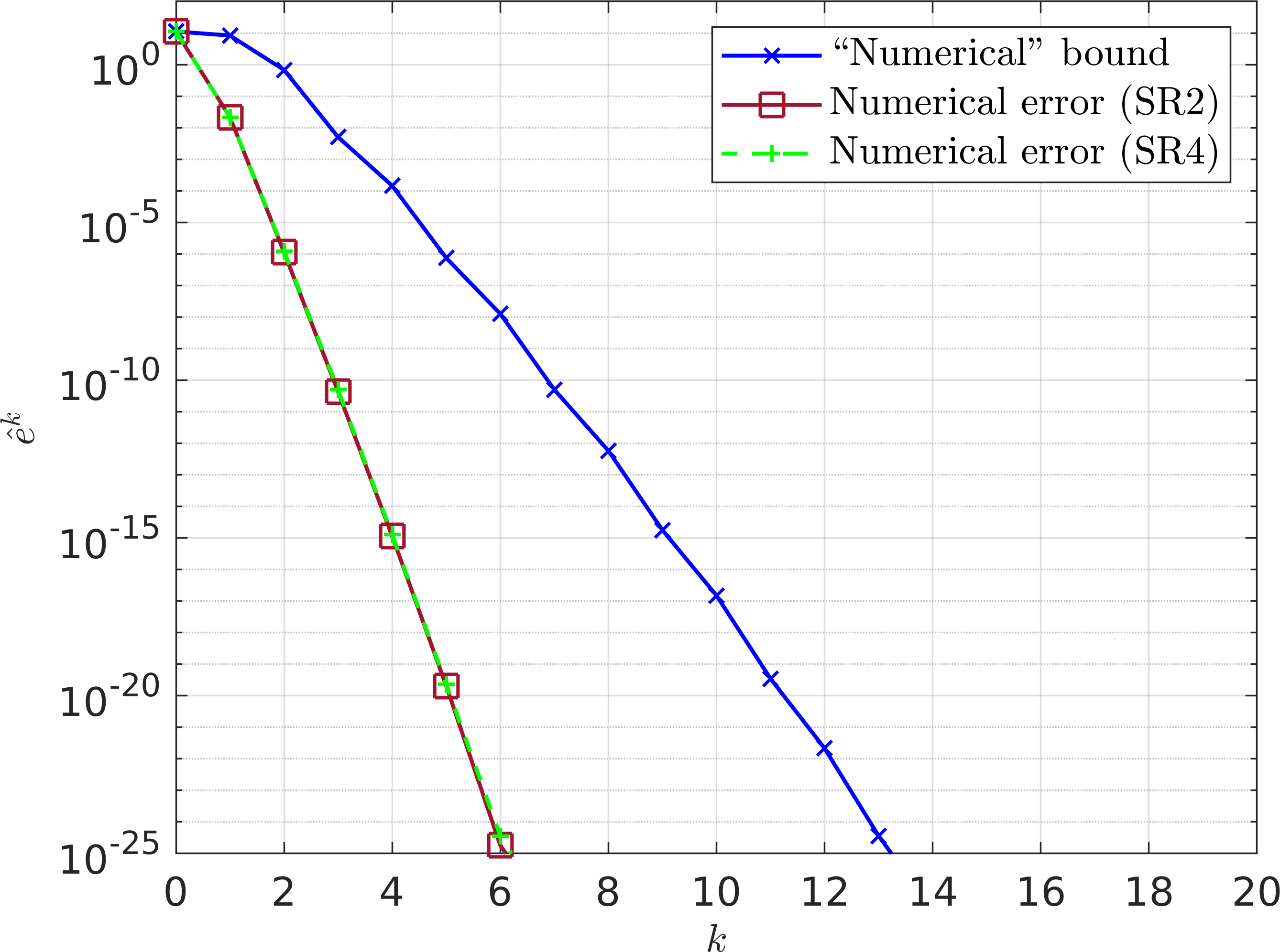}
        \caption{Sampling rules 2 and 4}
    \end{subfigure}
    \begin{subfigure}{0.49\linewidth}
        \includegraphics[width=\textwidth]{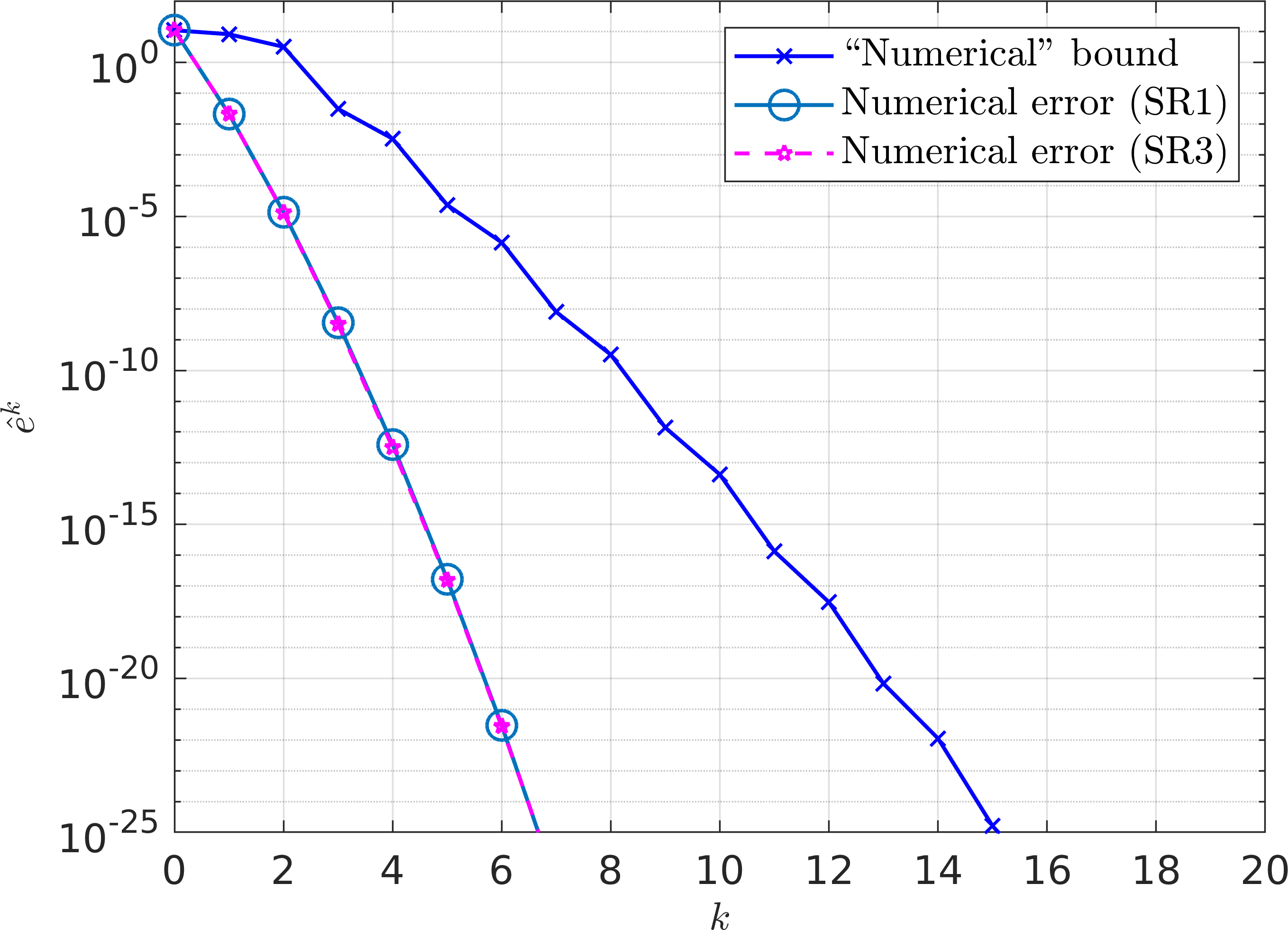}
        \caption{Sampling rules 1 and 3}
    \end{subfigure}
    \caption{``Numerical'' bounds vs. numerical errors for SParareal applied to the nonlinear scalar ODE \eqref{eq:scalar_nonlinearODE} (with $B\geq1$) using the state-dependent sampling rules.
    (a) The numerically solved recursion \eqref{eq:5-term1} is shown in blue and the numerical errors for sampling rules 2 and 4 in brown and green, respectively. 
    (b) The numerically solved recursion \eqref{eq:5-term2} is shown in blue and the numerical errors for sampling rules 1 and 3 in light blue and purple, respectively.
    Numerical errors were calculated by averaging over $500$ independent realisations of SParareal.}
    \label{fig:nonlinear_conv_SR}
\end{figure}
\begin{figure}[b!]
    \centering
    \includegraphics[width=0.49\linewidth]{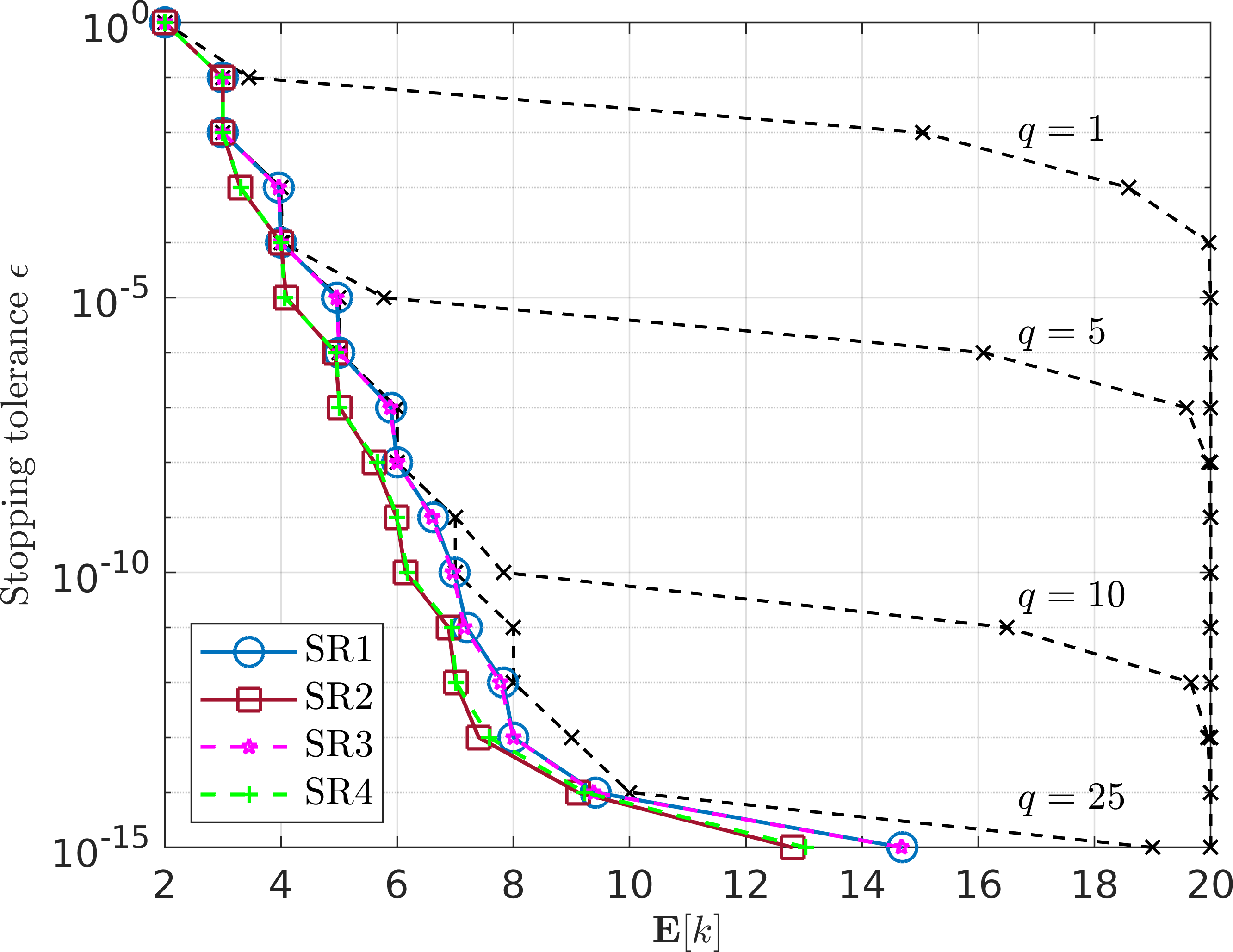}
    \caption{Expected number of iterations $k$ taken to reach stopping tolerance $\varepsilon$ \eqref{eq:tolerance} for SParareal applied to the nonlinear scalar ODE \eqref{eq:scalar_nonlinearODE} (with $B\geq1$).
    Results plotted using SParareal with each sampling rule (see legend) and the Gaussian perturbations \eqref{eq:Gaussians} for $q \in \{0,5,10,25 \}$ (dashed black lines).
    $\mathbb{E}[k]$ calculated by averaging $k$ over $500$ independent realisations of SParareal.}
    \label{fig:scalar_nonlin_epsilon_vs_k}
\end{figure}
In the following experiments, we solve the scalar nonlinear equation
\begin{equation} \label{eq:scalar_nonlinearODE} 
    \frac{\rd u}{\rd t} = \sqrt{u^2 + 2} \quad \text{over} \quad t \in [-1,1], \quad \text{with} \quad u(-1) = 5.
\end{equation}
This equation has exact solution $u(t) = \sqrt{2} \sinh(t + 1 + \sinh^{-1}(5/2))$.
We solve \eqref{eq:scalar_nonlinearODE} using SParareal with $N=20$ time slices (thus $\Delta T = 0.1$), exact solver $\F(u) = \sqrt{2} \sinh(\Delta T + \sinh^{-1}(u/\sqrt{2}))$, and forward Euler $\G = u + \Delta T \sqrt{u^2 + 2}$. 

\cref{fig:scalar_nonlinear} illustrates a good match between the superlinear bound (\cref{thm:superlinear_bound}, $B \geq 1$) and the numerical errors when using SParareal and the Gaussian perturbations \eqref{eq:Gaussians}.
One can see that at $k=0$ the error is quite large, $\mathcal{O}(10^1)$, and so even when using the forward Euler method for $\G$, the SParareal error decreases rapidly (for sufficiently large $q$).   
Given the bounds in \cref{corr:SR_24} and \cref{corr:SR_13} only hold when $B<1$, we again solve the respective recursions \eqref{eq:5-term1} and \eqref{eq:5-term2} numerically, obtaining a good match between theory and numerics when using the sampling rules (see \cref{fig:nonlinear_conv_SR}).
\cref{fig:scalar_nonlin_epsilon_vs_k} illustrates the performance of the state-independent perturbations and the sampling rules for varying stopping tolerance.
As it did for the system of linear ODEs, using SParareal with state-dependent perturbations is more effective than with the state-independent perturbations, regardless of the value of chosen value of $q$ for the Gaussian perturbations (recall that parareal can be recovered when choosing $q\geq25$).

\section{Conclusions} \label{sec:conclusions}
% what did we do? (bounds + numerics)
The SParareal algorithm solves IVPs  by perturbing solutions from the classic (deterministic) parareal scheme using a single sample drawn from a pre-specified probability distribution.
This probabilistic time-parallel scheme generates stochastic solutions to the IVP.
In this paper, we analyse the error of these stochastic numerical solutions by deriving mean-square error bounds for SParareal, equipped with different types of perturbations, applied to systems of nonlinear ODEs.

% discuss theoretical results
In \cref{sec:convergence}, we make assumptions about the fine ($\F$) and coarse ($\G$) numerical integrators used by SParareal, namely that $\F$ returns the exact solution to the ODE and that $\G$ has uniform local truncation error and satisfies a Lipschitz condition.
Error bounds were then derived for two types of random perturbation, one in which the random variables do not depend on the current state of the system (state-independent) and one in which they do (state-dependent).
In the state-independent case, where specific upper bounds were assumed on the second moments of the random variables, we derived both superlinear (\cref{thm:superlinear_bound}) and linear (\cref{corr:linear_bound}) bounds on the mean-square error.
In the state-dependent case, where a number different perturbations were defined according to ``sampling rules'' (from the the original work), we derived linear bounds on the errors (see \cref{corr:SR_24,corr:SR_13}).

% describe numerical results
In \cref{sec:numerics}, we illustrate these bounds, comparing them to the errors generated by running SParareal numerically. 
We demonstrate a good match between the theoretical bounds and numerical errors for systems of linear ODEs (\cref{sec:linear_ODEs}) and a scalar nonlinear ODE (\cref{sec:nonlinear_ODEs}).
Using the state-independent perturbations, we observed tight superlinear and linear bounds with respect to the numerical errors. 
However, because these perturbations do not adapt with iteration $k$ and time step $n$, their practical usage faced limitations.  
They encoded a hard lower bound on solution accuracy (of the order of the size of the second moments, see \cref{rem:loose_bound}) and more iterations were typically required to reach stopping tolerance for larger perturbations. 
Instead, the sampling rules, shown to adapt with both $k$ and $n$, did not suffer from these issues, as was previously discussed in \cite{pentland2022a}.
The derivation of the linear bounds for the sampling rules did, however, require multiple applications of the Peter-Paul inequality, resulting in less tight bounds compared to those found in the state-independent case.
Tighter bounds were observed by solving the double recursions \eqref{eq:5-term1} and \eqref{eq:5-term2} numerically.
In addition, these linear bounds required the constant $B$ to be less than one (restricting its use to problems where the Lipschitz constant for $\G$ is smaller than one) and that $\F$ be Lipschitz continuous for sampling rules 1 and 3 (an additional restriction).
In the future, it would be interesting to see if these restrictions can be avoided or whether one can derive bounds without the Lipschitz assumptions.

As high performance computing technology advances, the demand for faster and more accurate time-parallel integration methods will increase. 
With SParareal, we have seen that introducing ``local'' perturbations into an existing time-parallel scheme can enable convergence in fewer iterations (using the sampling rules) and can, on average, result in higher accuracy solutions (refer to numerical experiments in \cite{pentland2022a}).
Following multiple realisations of SParareal, these solutions can form a measure of uncertainty, i.e. a ``global'' distribution, over the true solution, the accuracy of which can be estimated by the error bounds derived in this paper. 
Further work is required to investigate whether similar bounds can be derived for the originally presented SParareal scheme (where $M$ samples can be taken from the probability distributions instead of just one) which is able to locate solutions with increasing accuracy and numerical speedup when increasing numbers of samples are taken.
In addition, in most practical applications, the exact flow map $\F$ is unknown and so it would be advantageous to investigate what happens when one relaxes this assumption, taking $\F$ to be a numerical flow map.

%%%%%%%%%%%%%%%%%%%%%%%%%%%%%%%%%%%%%%%%%%%%%%%%%%%%%%%%%%%%%%%%%
% Acknowledgements

\section*{Acknowledgements}
The authors would like to thank Han Cheng Lie for helpful discussions.
KP is funded by the Engineering and Physical Sciences Research Council through the MathSys~II CDT (EP/S022244/1) as well as the Culham Centre for Fusion Energy.
This work has partly been carried out within the framework of the EUROfusion Consortium and has received funding from the Euratom research and training programme 2014--2018 and 2019--2020 under grant agreement No.~633053.
The views and opinions expressed herein do not necessarily reflect those of any of the above-named institutions or funding agencies.
% The authors would also like to acknowledge the University of Warwick Scientific Computing Research Technology Platform for assistance in the research described in this paper.

%%%%%%%%%%%%%%%%%%%%%%%%%%%%%%%%%%%%%%%%%%%%%%%%%%%%%%%%%%%%%%%%%
% Bibliography

\bibliographystyle{abbrvnat}  
\bibliography{references}  

%%%%%%%%%%%%%%%%%%%%%%%%%%%%%%%%%%%%%%%%%%%%%%%%%%%%%%%%%%%%%%%%%
% Appendices

% !TeX root = ms.tex
\newpage
\begin{appendices} \label{appendix}
\crefalias{section}{appendix}
%\numberwithin{equation}{section}

%%%%%%%%%%%%%%%%%%%%%%%%%%%%%%%%%%%%%%%%%%%%%%%%%%%%%%%%%%%%%%%%%
\section{Standard results} \label{app:technical}
Here we state some results that we make repeated use of.

\begin{lemma}[Peter-Paul Inequality] \label{lem:youngs}
For any $\bu, \bv \in \Reals^d$ and $\delta > 0$, we have that
\begin{align} \label{eq:youngs2}
    2 \| \bu \| \| \bv \| \leq \delta \| \bu \|^2 + \delta^{-1} \| \bv \|^2.
\end{align}
\end{lemma}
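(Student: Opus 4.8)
The plan is to prove this by completing the square, which is the standard route for Young-type inequalities. The key observation is that both $\|\bu\|$ and $\|\bv\|$ are non-negative real numbers (being norms) and $\delta > 0$, so $\sqrt{\delta}$ and $\sqrt{\delta^{-1}}$ are well-defined positive reals.

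First I would form the quantity $\sqrt{\delta}\,\|\bu\| - \sqrt{\delta^{-1}}\,\|\bv\|$, which is a real number, and note that its square is non-negative:
\begin{align*}
    0 \leq \bigl( \sqrt{\delta}\,\|\bu\| - \sqrt{\delta^{-1}}\,\|\bv\| \bigr)^2 = \delta \|\bu\|^2 - 2 \|\bu\| \|\bv\| + \delta^{-1} \|\bv\|^2,
\end{align*}
where the cross term simplifies because $\sqrt{\delta}\cdot\sqrt{\delta^{-1}} = 1$. Rearranging this inequality to isolate $2\|\bu\|\|\bv\|$ on one side yields exactly \eqref{eq:youngs2}. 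No structure of $\Reals^d$ beyond the fact that $\|\cdot\|$ returns a non-negative scalar is used, so the argument is dimension-agnostic and works for any norm.

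I do not anticipate any genuine obstacle here: the statement is a textbook special case of Young's inequality with conjugate exponents $2$ and $2$, and the completing-the-square identity above is an exact algebraic manipulation rather than an estimate. The only minor point worth stating explicitly is that taking square roots of $\delta$ is legitimate precisely because $\delta > 0$ is assumed in the hypothesis; the inequality would otherwise be vacuous or ill-posed.
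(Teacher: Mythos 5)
Your proof is correct and complete: the completing-the-square argument via $0 \leq \bigl(\sqrt{\delta}\,\|\bu\| - \sqrt{\delta^{-1}}\,\|\bv\|\bigr)^2$ is the canonical derivation, and you correctly note that only non-negativity of the norm and positivity of $\delta$ are used. The paper itself states this lemma without proof as a standard result, so there is no alternative argument to compare against.
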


\begin{theorem}[Binomial Theorem] \label{thm:binomial}
For $|x|<1$ and some $m \in \mathbb{N}$, we have that
\begin{align} \label{eq:binom}
    \frac{1}{(1-x)^m} = \sum_{i=0}^{\infty} \binom{i+m-1}{i} x^i. 
\end{align}
\end{theorem}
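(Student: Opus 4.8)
The plan is to prove Theorem~\ref{thm:binomial} by induction on $m \in \mathbb{N}$, treating both sides as convergent power series on the disc $|x| < 1$. First I would record the standing fact that for $|x| < 1$ the geometric series $\sum_{i=0}^{\infty} x^i$ converges absolutely to $(1-x)^{-1}$; since $\binom{i+0}{i} = 1$ for every $i$, this is precisely the claimed identity for $m = 1$, which serves as the base case.

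For the inductive step, suppose the identity holds for some $m \ge 1$. Writing $(1-x)^{-(m+1)} = (1-x)^{-1}\,(1-x)^{-m}$ and invoking both the inductive hypothesis and the base case, I would multiply the two absolutely convergent series via their Cauchy product (legitimate for $|x| < 1$ by Mertens' theorem), obtaining
\[
    \frac{1}{(1-x)^{m+1}} = \sum_{i=0}^{\infty} \left( \sum_{j=0}^{i} \binom{j+m-1}{j} \right) x^i .
\]
It then remains only to evaluate the inner sum. By the hockey-stick identity, $\sum_{j=0}^{i} \binom{j+m-1}{j} = \binom{i+m}{i}$, which is exactly the coefficient required for the $m+1$ case and closes the induction. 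The hockey-stick identity can itself be established by a short secondary induction on $i$ using Pascal's rule $\binom{j+m}{j} + \binom{j+m}{j+1} = \binom{j+m+1}{j+1}$.

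An alternative I would keep in mind, and perhaps mention as a remark, is the direct Taylor-series computation: with $f(x) = (1-x)^{-m}$ one has $f^{(i)}(x) = m(m+1)\cdots(m+i-1)(1-x)^{-m-i}$, hence $f^{(i)}(0)/i! = \binom{m+i-1}{i}$, and the ratio test gives radius of convergence $1$, so Taylor's theorem yields equality on $|x| < 1$. The main obstacle in either route is purely bookkeeping: in the inductive route it is verifying the hockey-stick identity cleanly, while in the Taylor route it is controlling the Lagrange remainder uniformly on compact subsets of $(-1,1)$. Neither is genuinely difficult, so I expect the proof to be brief; I would favour the induction argument since it stays entirely at the level of elementary algebra of power series.
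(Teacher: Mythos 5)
Your proof is correct. Note that the paper itself states this as a standard result in its appendix and offers no proof at all, so there is nothing to compare your argument against; your induction on $m$ --- geometric series as the base case, Cauchy product of absolutely convergent series for the step, and the hockey-stick identity $\sum_{j=0}^{i}\binom{j+m-1}{j}=\binom{i+m}{i}$ (itself a short induction via Pascal's rule) to collapse the inner sum --- is a perfectly standard and complete way to establish it, as is your Taylor-series alternative.
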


\section{Generating Function Method} \label{app:GFM}
Here, we solve two recurrence relations using generating functions.
These two variable (``double'') recurrences crop up often in convergence analysis involving parareal (and other PinT) algorithms and have been used in a number of settings---refer to \cite{gander2008}, \cite{carrel2022}, and \cite{gander2022} for examples.
\begin{lemma} \label{lem:recursion}
Let $e^k_n$ be a non-negative sequence and $A, B, D, \Lambda \in \Reals$ be non-negative constants. If $e^k_n$ satisfies
\begin{align} \label{eq:rec1}
    e^{k+1}_{n+1} \leq A e^k_n + B e^{k+1}_n + \Lambda, \quad e^1_{n+1} \leq D + B e^1_n,
\end{align}
for $2 \leq k < n \leq N$ and $e^k_0 = 0$ $\forall k\geq0$, then 
\begin{align*}
e^k_n \leq D A^{k-1} \sum_{\ell = 0}^{n-k} \binom{\ell+k-1}{\ell} B^{\ell} + \Lambda \sum_{j=0}^{k-2} \sum_{\ell=0}^{n-(j+1)} \binom{\ell+j}{\ell} A^j B^{\ell}.
\end{align*}
\end{lemma}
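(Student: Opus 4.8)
The plan is to use the generating function method in the iteration-index $k$: for each fixed $k$, define the ordinary generating function $\rho_k(x) := \sum_{n \geq 0} e^k_n x^n$ (treating $e^k_n$ as the maximal admissible values, i.e.\ replacing the inequalities in \eqref{eq:rec1} by equalities, which is legitimate since all constants are non-negative and the recursion is monotone). First I would encode the base case: from $e^1_{n+1} = D + B e^1_n$ with $e^1_0 = 0$, multiplying by $x^{n+1}$ and summing gives $\rho_1(x) = \frac{Dx}{(1-x)} \cdot \frac{1}{1-Bx}$, whose coefficients are extracted by the geometric series, yielding $e^1_n = D \sum_{\ell=0}^{n-1} B^{\ell}$ — consistent with the claimed formula at $k=1$ (the double sum being empty).

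Next I would turn the double recursion into a recursion purely in $k$ for the generating functions. Multiplying $e^{k+1}_{n+1} = A e^k_n + B e^{k+1}_n + \Lambda$ by $x^{n+1}$, summing over $n \geq 0$, and using $e^k_0 = e^{k+1}_0 = 0$ gives
\begin{align*}
    \rho_{k+1}(x) = Ax\,\rho_k(x) + Bx\,\rho_{k+1}(x) + \frac{\Lambda x}{1-x},
\end{align*}
hence $\rho_{k+1}(x) = \frac{Ax}{1-Bx}\,\rho_k(x) + \frac{\Lambda x}{(1-x)(1-Bx)}$. Unrolling this affine recursion from the base case $\rho_1$ gives a closed form:
\begin{align*}
    \rho_k(x) = \left(\frac{Ax}{1-Bx}\right)^{k-1}\rho_1(x) + \frac{\Lambda x}{(1-x)(1-Bx)}\sum_{j=0}^{k-2}\left(\frac{Ax}{1-Bx}\right)^{j}.
\end{align*}
Substituting $\rho_1(x) = \frac{Dx}{(1-x)(1-Bx)}$ and simplifying, the first term becomes $D A^{k-1} x^k (1-x)^{-1}(1-Bx)^{-k}$ and the generic summand of the second term becomes $\Lambda A^j x^{j+1}(1-x)^{-1}(1-Bx)^{-(j+1)}$.

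The final step is coefficient extraction. For the first term I would expand $(1-Bx)^{-k} = \sum_{m \geq 0}\binom{m+k-1}{m}B^m x^m$ via the Binomial Theorem (\cref{thm:binomial}) and $(1-x)^{-1} = \sum_{i\geq 0} x^i$, so the coefficient of $x^n$ in $D A^{k-1} x^k (1-x)^{-1}(1-Bx)^{-k}$ is $D A^{k-1}\sum_{\ell=0}^{n-k}\binom{\ell+k-1}{\ell}B^{\ell}$ (where $\ell$ indexes the power of $B$ and the $(1-x)^{-1}$ factor simply lets the exponents sum to at most $n-k$). Doing the same for each term $j$ in the second sum — where $(1-Bx)^{-(j+1)}$ contributes $\binom{\ell+j}{\ell}B^{\ell}$ and the shift by $x^{j+1}$ plus the $(1-x)^{-1}$ factor give the range $0 \leq \ell \leq n-(j+1)$ — yields $\Lambda\sum_{j=0}^{k-2}\sum_{\ell=0}^{n-(j+1)}\binom{\ell+j}{\ell}A^j B^{\ell}$, matching the statement. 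The main obstacle is bookkeeping in this last step: correctly tracking how the extra $(1-x)^{-1}$ factor converts a single-index convolution into the stated upper limits on $\ell$, and making sure the shifts $x^k$ and $x^{j+1}$ produce exactly the index ranges $n-k$ and $n-(j+1)$; everything else is a routine affine recursion plus geometric-series manipulation.
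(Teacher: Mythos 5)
Your proposal is correct and follows essentially the same route as the paper's own proof: a generating function in $n$ for each fixed $k$, the affine recursion $g_{k+1}(x) \leq \frac{Ax}{1-Bx}g_k(x) + \frac{\Lambda x}{(1-x)(1-Bx)}$ unrolled from the base case, and coefficient extraction via the binomial theorem and Cauchy products. The only (minor, welcome) difference is that you explicitly justify passing from the inequalities to equalities by non-negativity and monotonicity before comparing coefficients, a point the paper's proof leaves implicit when it ``equates coefficients'' across an inequality of power series.
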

\begin{proof}
For $k \geq 1$, define the generating function for $e^k_n$ as
\begin{align} \label{eq:gen}
    g_k(x) = \sum_{n=1}^{\infty} e^k_n x^n.
\end{align}
Multiply \eqref{eq:rec1} by $x^{n+1}$ and sum from $n$ equals zero to infinity to obtain
\begin{align*}
    \sum_{n=0}^{\infty} e^{k+1}_{n+1} x^{n+1} &\leq A\sum_{n=0}^{\infty} e^{k}_{n} x^{n+1} + B\sum_{n=0}^{\infty} e^{k+1}_{n} x^{n+1} + \Lambda \sum_{n=0}^{\infty} x^{n+1}, \\
    \sum_{n=0}^{\infty} e^{1}_{n+1} x^{n+1} &\leq D \sum_{n=0}^{\infty} x^{n+1} + B\sum_{n=0}^{\infty} e^{1}_{n} x^{n+1}.
\end{align*}
Using \eqref{eq:gen}, binomial theorem \eqref{thm:binomial}, and recalling that $e^k_0=0 \ \forall k\geq0$, we can write these expressions as
\begin{align*}
    g_{k+1}(x) \leq \frac{Ax}{1-Bx} g_k(x) + \frac{\Lambda x}{(1-x)(1-Bx)}, \quad g_1 (x) \leq \frac{Dx}{(1-x)(1-Bx)},
\end{align*}
for $|x|<1$, which can be solved iteratively to give
\begin{align*}
    g_{k}(x) \leq \Big( \frac{Ax}{1-Bx} \Big)^{k-1} \frac{Dx}{(1-x)(1-Bx)} + \frac{\Lambda x}{(1-x)(1-Bx)} \sum_{j=0}^{k-2} \Big( \frac{Ax}{1-Bx} \Big)^j.
\end{align*}
% \noindent \textbf{Case one: $B \geq 1$} \\
% Using the fact that $1/(1-x) \leq 1/(1-Bx)$, \eqref{eq:gk} can be written as
% \begin{align*}
%     g_{k}(x) \leq D A^{k-1} x^{k} \Big( \frac{1}{1-Bx} \Big)^{k+1} + \Lambda \sum_{j=0}^{k-2} A^j x^{j+1} \Big( \frac{1}{1-Bx} \Big)^{j+2}.
% \end{align*}
% Applying the binomial theorem \eqref{eq:binom} to each term gives
% \begin{align*}
%     g_{k}(x) \leq D A^{k-1} x^{k} \sum_{i=0}^{\infty} \binom{i+k}{i} (Bx)^i + \Lambda \sum_{j=0}^{k-2} A^j x^{j+1} \sum_{i=0}^{\infty} \binom{i+j+1}{i} (Bx)^i.
% \end{align*}
% Changing the variables in each summation (over $i$) to $n=i+k$ and $n=i+j+1$ respectively yields
% \begin{align*}
%     g_{k}(x) = \sum_{n=1}^{\infty} e^k_n x^n \leq \sum_{n=k}^{\infty} \binom{n}{k} D A^{k-1} B^{n-k} x^n + \Lambda \sum_{j=0}^{k-2} \sum_{n=j+1}^{\infty} \binom{n}{j+1} A^j B^{n-(j+1)} x^n.
% \end{align*}
% Equating coefficients of $x^n$ on either side we obtain the bound for $B\geq1$.

% \vspace*{0.5cm}
% \noindent \textbf{Case two: $B < 1$} \\
Re-arranging terms, this can be written as
\begin{align*}
    g_{k}(x) \leq D A^{k-1} x^k \Big( \frac{1}{1-Bx} \Big)^{k} \frac{1}{(1-x)} + \Lambda \sum_{j=0}^{k-2} A^j x^{j+1} \Big( \frac{1}{1-Bx} \Big)^{j+1} \frac{1}{1-x}.
\end{align*}
The first term can be expressed as
\begin{align*}
    D A^{k-1} x^k \Big( \frac{1}{1-Bx} \Big)^{k} \frac{1}{(1-x)} 
    &= D A^{k-1} x^k \Bigg( \sum_{i=0}^{\infty} \binom{i+k-1}{i} (Bx)^i \Bigg) \Bigg( \sum_{i=0}^{\infty} x^i \Bigg) \\
    &= D A^{k-1} x^k \sum_{m=0}^{\infty} \Bigg( \sum_{\ell = 0}^{m} \binom{\ell+k-1}{\ell} B^{\ell} \Bigg) x^m \\
    &= D A^{k-1} \sum_{n=k}^{\infty} \Bigg( \sum_{\ell = 0}^{n-k} \binom{\ell+k-1}{\ell} B^{\ell} \Bigg) x^n.
\end{align*}
The first line follows by applying \eqref{eq:binom} twice, the second line using the Cauchy product, and the third by setting $n = m + k$.
The second term can be expressed as 
\begin{align*}
    \Lambda \sum_{j=0}^{k-2} A^j x^{j+1} \Big( \frac{1}{1-Bx} \Big)^{j+1} \frac{1}{1-x} 
    &= \Lambda \sum_{j=0}^{k-2} A^j x^{j+1} \Bigg( \sum_{i=0}^{\infty} \binom{i+j}{i} (Bx)^i \Bigg) \Bigg( \sum_{i=0}^{\infty} x^i \Bigg) \\
    &= \Lambda \sum_{j=0}^{k-2} A^j x^{j+1} \sum_{m=0}^{\infty} \Bigg( \sum_{\ell = 0}^{m} \binom{\ell+j}{\ell} B^{\ell} \Bigg) x^m \\
    &= \Lambda \sum_{n=j+1}^{\infty} \Bigg( \sum_{j=0}^{k-2} \sum_{\ell=0}^{n-(j+1)} \binom{\ell+j}{\ell} A^j B^{\ell} \Bigg) x^n.
\end{align*}
These steps follows as they did for the first term, except that we now set $n = m+j+1$ instead of $n = m + k$ in the last step.
Combining these expressions we get
\begin{align*}
    g_{k}(x) = \sum_{n=1}^{\infty} e^k_n x^n \leq D A^{k-1} \sum_{n=k}^{\infty} \Bigg( \sum_{\ell = 0}^{n-k} \binom{\ell+k-1}{\ell} B^{\ell} \Bigg) x^n + \Lambda \sum_{n=j+1}^{\infty} \Bigg( \sum_{j=0}^{k-2} \sum_{\ell=0}^{n-(j+1)} \binom{\ell+j}{\ell} A^j B^{\ell} \Bigg) x^n.
\end{align*}
Equating the coefficients in $x^n$ on both sides of the inequality we obtain the bound. 
\end{proof}

The initial condition for the recursion \eqref{eq:rec1} can be written differently depending on the available information, i.e.\ one could instead use $e^1_{n+1} \leq D:= \hat{e}^1$, slightly altering the final bound obtained.
% Also, instead of using the relation $1/(1-x) \leq 1/(1-Bx)$, one could alternatively use the Cauchy product formula to multiply the series expansions in \eqref{eq:gk} \citep{gander2022,carrel2022}.

\begin{lemma} \label{lem:recursion2}
Let $\hat{e}^k$ be a non-negative sequence and $\tilde{A}, \tilde{B} \in \Reals$ be non-negative constants. If $\hat{e}^k$ satisfies
\begin{align} \label{eq:rec2}
    \hat{e}^{k+1} \leq \tilde{A} \hat{e}^k + \tilde{B} \hat{e}^{k-1},
\end{align}
with initial conditions $\hat{e}^0$ and $\hat{e}^1$, then
\begin{align*}
    \hat{e}^k \leq \hat{e}^0 \biggr[ \frac{\tilde{A} + \sqrt{\tilde{A}^2 + 4 \tilde{B}}}{2} \biggr]^k.
\end{align*}
\end{lemma}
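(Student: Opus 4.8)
The plan is to solve the scalar recursion \eqref{eq:rec2} by the same generating-function technique used for \cref{lem:recursion}. First I would introduce the larger root of the characteristic polynomial $r^2 - \tilde A r - \tilde B$, namely
\[
  r_+ := \frac{\tilde A + \sqrt{\tilde A^2 + 4\tilde B}}{2},
\]
and record the two facts that do all the work: since $\tilde A, \tilde B \ge 0$ we have $r_+ \ge \tilde A \ge 0$ and $r_+^2 = \tilde A r_+ + \tilde B$. Writing $1 - \tilde A x - \tilde B x^2 = (1 - r_+ x)(1 - r_- x)$, the complementary root $r_- = \tilde A - r_+$ is $\le 0$, so $1 - r_- x \ge 1$ for all $x \ge 0$; this is the factor that will be thrown away later.

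Next I would set $g(x) = \sum_{k \ge 0} \hat e^k x^k$, multiply \eqref{eq:rec2} (valid for $k \ge 1$) by $x^{k+1}$, sum over $k \ge 1$, and keep the inequality by non-negativity of the $\hat e^k$. Collecting terms produces
\[
  g(x)\,(1 - \tilde A x - \tilde B x^2) \le \hat e^0 + (\hat e^1 - \tilde A \hat e^0)\,x
\]
on the common radius of convergence, wherever $1 - \tilde A x - \tilde B x^2 > 0$ (that is, $0 \le x < 1/r_+$). Dividing by $(1 - r_+ x)(1 - r_- x)$, bounding the numerator above by $\hat e^0$ for $x \ge 0$ (valid once $\hat e^1 \le \tilde A \hat e^0$, since then the linear coefficient is non-positive), and using $1 - r_- x \ge 1$, I obtain $g(x) \le \hat e^0/(1 - r_+ x) = \hat e^0 \sum_{k \ge 0} r_+^k x^k$. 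Comparing coefficients of $x^k$ on both sides --- legitimate because every series involved has non-negative coefficients, exactly as argued in the proof of \cref{lem:recursion} --- gives $\hat e^k \le \hat e^0 r_+^k$, which is the claim. A shorter alternative worth recording is strong induction on $k$: the cases $k = 0$ and $k = 1$ form the base (the latter being precisely $\hat e^1 \le r_+ \hat e^0$), and if $\hat e^{k-1} \le \hat e^0 r_+^{k-1}$ and $\hat e^k \le \hat e^0 r_+^k$ then $\hat e^{k+1} \le \tilde A \hat e^k + \tilde B \hat e^{k-1} \le \hat e^0 r_+^{k-1}(\tilde A r_+ + \tilde B) = \hat e^0 r_+^{k+1}$.

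The real obstacle is not analytic but lies in the treatment of $k = 1$: the asserted bound $\hat e^k \le \hat e^0 r_+^k$ forces $\hat e^1 \le r_+ \hat e^0$, which is not implied by \eqref{eq:rec2} alone. I would therefore either add this as an explicit hypothesis, or --- better for the applications --- add the slightly stronger relation $\hat e^1 \le \tilde A \hat e^0$, which in \cref{corr:SR_24,corr:SR_13} is automatic because the iteration-$1$ error there obeys $\hat e^1 \le A\hat e^0\sum_{i \ge 0} B^i \le \tilde A\,\hat e^0$ when $B < 1$. Failing that, one keeps the $\hat e^1$-term and states the marginally weaker $\hat e^k \le \hat e^0 r_+^k + \max\{\hat e^1 - \tilde A \hat e^0,\,0\}\,r_+^{k-1}$. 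Everything else --- convergence of $g$ near $0$ and the legitimacy of term-by-term coefficient extraction --- is handled verbatim as in \cref{lem:recursion}.
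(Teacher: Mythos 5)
Your generating-function route is the same as the paper's: both arrive at $g(x)\bigl(1-\tilde A x-\tilde B x^2\bigr)\le\hat e^0(1-\tilde Ax)+\hat e^1x$ and then extract coefficients. The genuinely valuable part of your proposal is the observation about $\hat e^1$: you are right that the stated conclusion does not follow from the stated hypotheses. The paper's proof keeps the $\hat e^1$-dependence correctly up to the two-term expression in $\lambda_1^k$ and $\lambda_2^k$, and the gap enters at the final ``without loss of generality'' simplification to $\hat e^0\lambda_1^k$: replacing $\lambda_2^k$ by $\lambda_1^k$ and summing the two coefficients gives $\hat e^0\lambda_1^k$ only when the coefficient of $\lambda_1^k$, namely $\tilde A\hat e^0+\hat e^0\sqrt{\tilde A^2+4\tilde B}-2\hat e^1=2(\lambda_1\hat e^0-\hat e^1)$, is non-negative, i.e.\ exactly when $\hat e^1\le\lambda_1\hat e^0$. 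Without that, the lemma is false: take $\tilde A=0$, $\tilde B=1$, $\hat e^0=0$, $\hat e^1=1$; then $\lambda_1=1$, the recursion permits $\hat e^3=1$, but the claimed bound gives $\hat e^3\le 0$. So your proposed fix --- adding $\hat e^1\le\lambda_1\hat e^0$ (or the stronger $\hat e^1\le\tilde A\hat e^0$) as a hypothesis, or keeping the extra $\max\{\hat e^1-\tilde A\hat e^0,0\}\,\lambda_1^{k-1}$ term --- is the correct repair, and you correctly note that the hypothesis is automatic in \cref{corr:SR_24,corr:SR_13} via \eqref{eq:remark1} when $B<1$. One further point in favour of your strong-induction alternative: it is the cleaner rigorous argument here, because the coefficient-comparison step is delicate when the second root $r_-=\tilde A-\lambda_1$ is negative (a pointwise inequality $g(x)\le h(x)$ on an interval does not by itself imply coefficient-wise inequality; one should pass through a majorant sequence satisfying the recursion with equality, which is precisely what the induction does directly).
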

\begin{proof}
Define the following generating function for $\hat{e}^k$:
\begin{align*}
    g(x) = \sum_{k=0}^{\infty} \hat{e}^k x^k.
\end{align*}
Multiply \eqref{eq:rec2} by $x^{k+1}$ and sum from $k$ equals one to infinity to obtain
\begin{align*}
    \sum_{k=1}^{\infty} \hat{e}^{k+1} x^{k+1} &\leq \tilde{A} \sum_{k=1}^{\infty} \hat{e}^{k} x^{k+1} + \tilde{B} \sum_{k=1}^{\infty} \hat{e}^{k-1} x^{k+1}.
\end{align*}
Shifting indices, rearranging, and using the initial conditions we get
\begin{align*}
    g(x) = \sum_{k=0}^{\infty} \hat{e}^k x^k \leq \frac{\hat{e}^0(1 - \tilde{A}x) + \hat{e}^1 x}{1 - \tilde{A}x - \tilde{B}x^2}.
\end{align*}
Expanding the right hand side in powers of $x^k$, the coefficients give us 
\begin{align*}
    \hat{e}^k \leq \frac{1}{2\sqrt{\tilde{A}^2 + 4\tilde{B}}} \bigg[ (\tilde{A} \hat{e}^0 + \hat{e}^0 \sqrt{\tilde{A}^2 + 4\tilde{B}} - 2\hat{e}^1) \lambda_1^k + (-\tilde{A} \hat{e}^0 + \hat{e}^0 \sqrt{\tilde{A}^2 + 4\tilde{B}} + 2\hat{e}^1) \lambda_2^k \bigg],
\end{align*}
where
\begin{align*}
    \lambda_{1,2} = \frac{\tilde{A} \pm \sqrt{\tilde{A}^2 + 4 \tilde{B}}}{2}.
\end{align*}
Without loss of generality, we use that $\lambda_1 \geq \lambda_2$ to simplify the bound and obtain
\begin{align*}
    \hat{e}^k \leq \hat{e}^0 \lambda_1^k,
\end{align*}
which yields the desired result.
\end{proof}

\end{appendices}

%%%%%%%%%%%%%%%%%%%%%%%%%%%%%%%%%%%%%%%%%%%%%%%%%%%%%%%%%%%%%%%%%

\end{document}